\newcommand{\pcf}[1]{\operatorname{pcf}(#1)}
\newcommand{\cf}{\operatorname{cf}}
\newcommand{\force}{\Vdash}
\newcommand{\reg}{{\rm Reg}}
\title{Prikry-type Forcing and the Set of Possible Cofinalities}
\author{Kenta Tsukuura}
\address{Doctoral Program in Mathematics, Degree Programs in Pure and Applied Sciences, Graduate School of Science and Technology, University of Tsukuba, Tsukuba, 305-8571, Japan}
\email{tukuura@math.tsukuba.ac.jp}
\subjclass[2020]{03E04, 03E35, 03E55}
\keywords{pcf theory, Prikry-type forcing, Prikry forcing, Magidor forcing}
\thanks{This research was supported by Grant-in-Aid for JSPS Research Fellow Number 20J21103 and JSPS KAKENHI Grant Nos.18K03403 and 18K03404. The author is grateful to Masahiro Shioya for helpful discussions.}
\theoremstyle{plain}
\newtheorem{thm}{Theorem}[section]
\newtheorem{defi}[thm]{Definition}
\newtheorem{lem}[thm]{Lemma}
\newtheorem{coro}[thm]{Corollary}
\newtheorem{ques}[thm]{Question}
\begin{document}
\maketitle 
\begin{abstract}
It is known that the set of possible cofinalities $\pcf{A}$ has good properties if $A$ is a progressive interval of regular cardinals. In this paper, we give an interval of regular cardinals $A$ such that $\pcf{A}$ has no good properties in the presence of a measurable cardinal, or in generic extensions by Prikry-type forcing.

\end{abstract}

\section{Introduction}
Cardinal arithmetic has been one of the most important areas in set theory. 
Shortly after Cohen devised the method of forcing, 
Easton~\cite{easton} proved that the powers of regular cardinals 
is subject only to K\"onig's theorem in ZFC. 
Easton's theorem left the behavior of the powers of \emph{singular} cardinals 
as the Singular Cardinal Problem. 
Some time later, Silver~\cite{silver} proved 
the first nontrivial result around the problem:
Singular cardinals of uncountable cofinality cannot be the least cardinal 
at which the Generalized Continuum Hypothesis (GCH) fails. 
Still later, 
Shelah~\cite{shelah} developed pcf theory and established a result 
that supersedes Silver's theorem:

      \begin{thm}[Shelah]\label{shelah}
    $\aleph_{\omega}^{\aleph_{0}} < (2^{\aleph_{0}})^{+} + \aleph_{\omega_{4}}$.
      \end{thm}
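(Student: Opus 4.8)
The plan is to reduce the statement to a single bound on a maximal pcf value and then to attack that bound through Shelah's structure theory. First I would record the arithmetic identity $\aleph_{\omega}^{\aleph_{0}} = 2^{\aleph_{0}}\cdot \cf([\aleph_{\omega}]^{\aleph_{0}},\subseteq)$: the inequality $\ge$ is immediate, while $\le$ comes from covering each countable subset of $\aleph_{\omega}$ by a member of a cofinal family of size $\cf([\aleph_{\omega}]^{\aleph_{0}})$ and noting that each such member has only $2^{\aleph_{0}}$ countable subsets. Since the product of two infinite cardinals equals their maximum, we get $\aleph_{\omega}^{\aleph_{0}} = \max(2^{\aleph_{0}},\cf([\aleph_{\omega}]^{\aleph_{0}}))$, so it suffices to prove $\cf([\aleph_{\omega}]^{\aleph_{0}}) < \aleph_{\omega_{4}}$, the $2^{\aleph_{0}}$ factor being absorbed by the $(2^{\aleph_{0}})^{+}$ summand.

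Next I would pass to pcf language. Setting $A = \{\aleph_{n} : 1\le n<\omega\}$ (discarding $\aleph_{0}$ so that $A$ is progressive, $|A| = \aleph_{0} < \aleph_{1} = \min A$), I would show $\cf([\aleph_{\omega}]^{\aleph_{0}},\subseteq) = \max\pcf{A}$: a $\le$-increasing cofinal sequence in $\prod A$ modulo the bounded ideal yields a cofinal family in $[\aleph_{\omega}]^{\aleph_{0}}$ and conversely, and $\pcf{A}$ possesses a maximum. The entire theorem thereby reduces to the single inequality $\max\pcf{A} < \aleph_{\omega_{4}}$.

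The heart of the argument is the structure theory of $\pcf{A}$. For progressive $A$ I would first set up the basic apparatus: the ideals $J_{<\lambda}[A]$, the existence of $\max\pcf{A}$, and a generating sequence $\langle B_{\lambda} : \lambda\in\pcf{A}\rangle$ satisfying $J_{<\lambda^{+}} = J_{<\lambda} + B_{\lambda}$. The decisive refinements are that the generators can be chosen \emph{transitive} (smooth) and the \emph{Localization Theorem}: any $\lambda\in\pcf{B}$ with $B\subseteq\pcf{A}$ already lies in $\pcf{B_{0}}$ for some $B_{0}\subseteq B$ with $|B_{0}|\le|A|$. Feeding the transitive generators into localization and running a rank/tree analysis yields the cardinality bound $|\pcf{A}| < |A|^{+4}$, i.e. $|\pcf{A}| < \aleph_{4}$; the exponent $4$, and hence the subscript $\omega_{4}$, is precisely what this counting produces. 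Finally I would convert the cardinality bound into the position bound $\max\pcf{A} < \aleph_{\omega_{4}}$ using the interval structure of $A$, controlling level by level how far the regular cardinals of $\pcf{A}$ can climb above $\sup A = \aleph_{\omega}$, so that an order type below $\aleph_{4}$ forces the maximum below the $\aleph_{4}$-th aleph, namely below $\aleph_{\omega_{4}}$.

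The main obstacle is plainly the bound $|\pcf{A}| < |A|^{+4}$ together with its upgrade to the sup bound; everything preceding it is bookkeeping. Within that, the genuinely hard components are the existence of transitive generating sequences and the Localization Theorem, both of which I expect to prove by working inside a continuous $\in$-increasing chain of elementary submodels of some $H(\chi)$ and extracting exact upper bounds for the ranks of functions in $\prod A$. I would budget essentially all of the effort there, since the reductions above and the final cardinal-arithmetic packaging are comparatively routine.
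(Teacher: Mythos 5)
Your proposal is correct and follows essentially the same route the paper outlines: the identity $\aleph_{\omega}^{\aleph_{0}} = 2^{\aleph_{0}} + \cf([\aleph_{\omega}]^{\aleph_{0}},\subseteq)$, the reduction to $\cf([\aleph_{\omega}]^{\aleph_{0}},\subseteq) = \max\pcf{\{\aleph_{n} \mid n<\omega\}} < \aleph_{\omega_{4}}$, and the two structural facts about progressive intervals ($\pcf{A}$ is an interval of regular cardinals with a largest element, and $|\pcf{A}| < |A|^{+4}$) that the paper records as Theorem~\ref{progressive}. The extra machinery you sketch (transitive generators, localization, elementary submodel arguments) is precisely the content of Shelah's proof of those two facts, which the paper cites rather than reproduces.
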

      
An outline of the proof is as follows.
First we have
$\aleph_{\omega}^{\aleph_{0}} = 2^{\aleph_{0}} + {\rm cf}([\aleph_{\omega}]^{\aleph_0},\subseteq)$.
The crucial claim is that
${\rm cf}([\aleph_{\omega}]^{\aleph_0},\subseteq)
=\max{\pcf{\{\aleph_{n} \mid n < \omega\}}}<\aleph_{\omega_4}$.
Shelah proved it by analyzing the structure of 
$\pcf{\{\aleph_{n} \mid n < \omega\}}$.
More specifically, he obtained the latter inequality 
by showing the following results
for a progressive interval of regular cardinals $A$:
\begin{itemize}
 \item $\pcf{A}$ is an interval of regular cardinals with a largest element.
 \item $|\pcf{A}| < |A|^{+4}$.
\end{itemize}

Theorem~\ref{shelah} can be generalized for a \emph{non}-fixed point 
of the $\aleph$ function. 
Let $\kappa$ be a singular cardinal with $\kappa = \aleph_\mu > \mu$. 
Shelah proved that
${\rm cf}([\kappa]^{|\mu|},\subseteq)=\max \pcf{A}$ for some progressive interval of regular cardinals $A$ with $\sup A = \kappa$. 
As before, this reduces the investigation of the power of $\kappa$ 
to that of the structure of 
${\pcf{A}}$. 
Note that we can take $A$ to be progressive because
$\kappa$ is a \emph{non}-fixed point of the $\aleph$ function. 
Thus the assumption of $A$ being progressive seems essential in pcf theory. 
Now one may ask
     \begin{ques}\label{question}
What if $A$ is a {non}-progressive interval of regular cardinals?
     \end{ques}
Motivated by the question, we prove in this paper
     \begin{thm}\label{maintheorem}
    Suppose $\kappa$ is a measurable cardinal. Then the following hold:
     \begin{enumerate}
      \item[$(1)$] $\pcf{\kappa \cap \reg} = (2^{\kappa})^{+}\cap \reg$.
      \item[$(2)$] Prikry forcing over $\kappa$ forces that $\pcf{{\kappa} \cap {\rm Reg}} = (2^{{\kappa}})^{+} \cap {\rm Reg}$.
      \end{enumerate}
     \end{thm}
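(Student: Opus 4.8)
The plan is to prove each equality by the two inclusions, the inclusion $\pcf{\kappa\cap\reg}\subseteq(2^\kappa)^+\cap\reg$ being routine and the reverse inclusion—realizing every regular cardinal up to $2^\kappa$ as a cofinality—carrying all the weight. First I would record that a measurable cardinal is inaccessible, hence a strong limit, so $|\prod(\kappa\cap\reg)|\le\kappa^\kappa=2^\kappa$. Since every element of a $\pcf{}$ set is, by definition, the cofinality of a reduced product, it is a regular cardinal bounded by $|\prod(\kappa\cap\reg)|\le2^\kappa$; this gives the upper bound at once. For the trivial part of the lower bound, every regular $\mu<\kappa$ is an element of $A:=\kappa\cap\reg$, and $A\subseteq\pcf{A}$ always holds, so the whole initial segment below $\kappa$ is covered.

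To place $\kappa$ itself in the $\pcf{}$ set I would use a normal measure $U$ on $\kappa$. Since $\kappa$ is inaccessible in $M=\mathrm{Ult}(V,U)$, the set of inaccessibles below $\kappa$—and hence $A$ itself—belongs to $U$, so $U$ may be viewed as an ultrafilter on $A$. Normality gives $[\mathrm{id}]_U=\kappa$, so the reduced product $\prod A/U$ is order-isomorphic to $\kappa$ and therefore has cofinality $\kappa$; thus $\kappa\in\pcf{A}$.

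The heart of the argument is realizing every regular $\lambda$ with $\kappa<\lambda\le2^\kappa$. Here a pointwise-increasing sequence in $\prod A$ can have length only $\aleph_0$, and any $\kappa$-complete ultrafilter (such as $U$ or its sums) collapses the product to cofinality $\le\kappa$; so I must produce, for each such $\lambda$, a necessarily non-$\kappa$-complete ultrafilter $D_\lambda$ on $A$ together with a $<_{D_\lambda}$-increasing sequence $\langle f_\xi:\xi<\lambda\rangle$ that is cofinal in $(\prod A,<_{D_\lambda})$, i.e.\ $\operatorname{tcf}(\prod A/D_\lambda)=\lambda$. The plan is to build the scale and a generating system for $D_\lambda$ by a simultaneous recursion of length $\lambda$, using a tower modulo the bounded ideal (available because $2^\kappa\ge\lambda$) to drive the growth of the $f_\xi$ along the top of $\kappa$ while anchoring the bounded part through $U$ so that the tail imitates the cofinal-in-$\kappa$ skeleton of the previous step. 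I expect this to be the main obstacle: unlike the case $\kappa=\omega$, where the spectrum of $\operatorname{cf}(\omega^\omega/D)$ need not fill a whole interval, one must exploit the strong-limit-ness of $\kappa$ and the measure to guarantee that no regular value in $(\kappa,2^\kappa]$ is skipped, the delicate points being to keep $D_\lambda$ an ultrafilter while pinning the cofinality to exactly $\lambda$ (the bound $\ge\lambda$ from strict increase, the bound $\le\lambda$ from cofinality).

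For part $(2)$ I would first record the standard preservation properties of Prikry forcing $\mathbb P$: it adds no bounded subsets of $\kappa$, preserves all cardinals and cofinalities except that $\operatorname{cf}(\kappa)$ becomes $\omega$, and preserves $2^\kappa$. Consequently $A$ and $2^\kappa$ are unchanged, and since $\kappa$ is now singular it correctly drops out of both sides, $(2^\kappa)^+\cap\reg$ no longer listing $\kappa$. The upper bound transfers verbatim, and the $\kappa$-step is simply omitted. The interval $(\kappa,2^\kappa]$ must then be re-covered in $V[G]$, where the measure is gone; here the plan is to rerun the recursion of the previous paragraph inside $V[G]$, using the Prikry generic sequence $\langle\kappa_n:n<\omega\rangle$ as the cofinal-in-$\kappa$ skeleton in place of the measure. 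The secondary obstacle is that $\mathbb P$ does add new subsets of $A$, so one must check both that the recursively built $D_\lambda$ can be arranged to decide them—remaining an ultrafilter of $V[G]$—and that these new sets introduce no shorter cofinal sequence; the Prikry property, reducing each relevant statement to a pure extension and hence to ground-model information, is the natural tool for both checks.
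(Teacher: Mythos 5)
Your upper bound, the inclusion $\kappa\cap\reg\subseteq\pcf{\kappa\cap\reg}$, and the argument that $\kappa\in\pcf{\kappa\cap\reg}$ via normality of $U$ are all fine and agree with the paper. But the heart of the theorem --- realizing every regular $\theta$ with $\kappa<\theta<(2^{\kappa})^{+}$ --- is exactly the part you do not prove: you offer only a sketch of a ``simultaneous recursion'' building an ultrafilter $D_{\lambda}$ together with a scale, and you yourself flag it as the main obstacle. Worse, the premise guiding that sketch is false: you assert that any $\kappa$-complete ultrafilter on $A=\kappa\cap\reg$ yields $\cf\left(\prod A,<_{D}\right)\le\kappa$, so that $D_{\lambda}$ must be non-$\kappa$-complete. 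In fact the paper realizes each $\theta$ by a $\kappa$-complete ultrafilter, namely the pushforward $U_{\theta}=\{Y\subseteq A\mid f_{\theta}^{-1}``Y\in U\}$ of the normal measure $U$ under a function $f_{\theta}\in{}^{\kappa}(\kappa\cap\reg)$ representing $\theta$ in $\mathrm{Ult}(V,U)$; such an $f_{\theta}$ exists because $\theta\le2^{\kappa}<j(\kappa)$. The key fact you are missing is that the functions $f_{\alpha}$ representing the ordinals $\alpha<\theta$ form, modulo $U$, an increasing and cofinal sequence in $\prod_{\xi\in X}f_{\theta}(\xi)$, where $X\in U$ is the set on which $f_{\theta}$ is increasing and $\xi\le f_{\theta}(\xi)$: any $h<_{U}f_{\theta}$ has $[h]_{U}=\alpha$ for some $\alpha<\theta$, hence $h=_{U}f_{\alpha}$. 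Pushing forward along $f_{\theta}$ converts this into a cofinal sequence of length exactly $\theta$ in $\left(\prod A,<_{U_{\theta}}\right)$, giving both bounds on the cofinality at once. No recursion, tower, or incomplete ultrafilter is needed, and nothing in your sketch pins the cofinality down to exactly $\lambda$.

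The gap propagates to part (2). The paper does not rerun any construction inside $V[G]$; it reuses the same ground-model scale $\langle f_{\alpha}\mid\alpha<\theta\rangle$ restricted to the Prikry generic $\dot{g}$, showing (i) it is forced to be $<^{*}$-increasing in $\prod_{\xi\in\dot{g}}f_{\theta}(\xi)$, since $\dot{g}\subseteq^{*}Z$ for every $Z\in U$, and (ii) it is forced to be cofinal, by a Prikry-lemma argument: for each stem $b$ one decides the value $\dot{h}(\max b)$ below $\langle b,X\rangle$, takes suprema over the fewer than $f_{\theta}(\xi)$ many relevant stems using regularity of $f_{\theta}(\xi)>\xi$, and diagonally intersects, so that every name $\dot{h}$ for an element of the product is dominated by a ground-model $h\in\prod_{\xi\in X}f_{\theta}(\xi)$, hence by some $f_{\alpha}$. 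Your closing remark that the Prikry property reduces everything to ground-model information is the right instinct, but with no scale from part (1) to transfer, part (2) is likewise not established.
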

     
From Theorem~\ref{maintheorem}~(1) we get

     \begin{coro}\label{coro1}
         Suppose $\kappa$ is a measurable cardinal. Then the following hold:
     \begin{enumerate}
      \item[$(1)$] 
      $\pcf{\kappa \cap \reg}$ has no largest element if $2^{\kappa}$ is singular.
      \item[$(2)$] 
      $|\pcf{\kappa \cap \reg}|> |\kappa \cap \reg|^{+4}$ if $2^{\kappa} > \kappa^{+(\kappa^{+4})}$.
      \end{enumerate}
     \end{coro}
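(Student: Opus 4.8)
The plan is to deduce both parts directly from Theorem~\ref{maintheorem}~(1), which identifies $\pcf{\kappa \cap \reg}$ with the explicit set $(2^{\kappa})^{+} \cap \reg$, namely the collection of all infinite regular cardinals $\le 2^{\kappa}$. Once this replacement is made, each part becomes an elementary computation about how regular cardinals are distributed below $2^{\kappa}$, so the real content sits entirely in the theorem and the corollary is a matter of careful bookkeeping.

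For part~(1) I would argue as follows. Assume $2^{\kappa}$ is singular; then it is a limit cardinal and, being singular, is not itself regular, so $2^{\kappa}\notin \pcf{\kappa\cap\reg}$ and every $\lambda\in\pcf{\kappa\cap\reg}$ satisfies $\lambda<2^{\kappa}$. Given such a $\lambda$, its successor $\lambda^{+}$ is again regular and satisfies $\lambda^{+}\le 2^{\kappa}$; since $2^{\kappa}$ is a \emph{limit} cardinal we in fact get $\lambda^{+}<2^{\kappa}$, so $\lambda^{+}\in (2^{\kappa})^{+}\cap\reg=\pcf{\kappa\cap\reg}$ with $\lambda^{+}>\lambda$. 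Hence no element of $\pcf{\kappa\cap\reg}$ is largest. (Were $2^{\kappa}$ regular it would itself be the maximum, which is exactly why singularity of $2^{\kappa}$ is the hypothesis.)

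For part~(2) I would first pin down the two cardinalities. As a measurable cardinal $\kappa$ is inaccessible, hence a limit of regular cardinals with $\kappa=\aleph_{\kappa}$, and the successor cardinals below $\kappa$ already give $|\kappa\cap\reg|=\kappa$, so the target quantity is $|\kappa\cap\reg|^{+4}=\kappa^{+4}$. On the other side, Theorem~\ref{maintheorem}~(1) yields $|\pcf{\kappa\cap\reg}|=|(2^{\kappa})^{+}\cap\reg|$, the number of infinite regular cardinals $\le 2^{\kappa}$. Writing $2^{\kappa}=\aleph_{\delta}$ and using $\kappa=\aleph_{\kappa}$, every successor cardinal $\kappa^{+\alpha}\le 2^{\kappa}$ (with $\alpha$ a successor ordinal) is regular and lies in this set, so the count is governed by $|\delta|$. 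The hypothesis $2^{\kappa}>\kappa^{+(\kappa^{+4})}$ translates, via the identity $\kappa^{+(\kappa^{+4})}=\aleph_{\kappa^{+4}}$, into $\delta>\kappa^{+4}$, which I would then use to conclude $|\pcf{\kappa\cap\reg}|>\kappa^{+4}=|\kappa\cap\reg|^{+4}$.

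The only delicate points are the standard identity $\kappa=\aleph_{\kappa}$ for inaccessible $\kappa$—needed to locate $\kappa^{+(\kappa^{+4})}=\aleph_{\kappa^{+4}}$ correctly in the $\aleph$-hierarchy—and, for part~(1), the observation that it is precisely the singularity of $2^{\kappa}$ that destroys the maximum. I do not expect a genuine obstacle beyond this bookkeeping: the step most worth stating carefully is the translation of $2^{\kappa}>\kappa^{+(\kappa^{+4})}$ into the lower bound on $|(2^{\kappa})^{+}\cap\reg|$, since it is exactly here that the comparison with Shelah's ZFC bound $|\pcf{A}|<|A|^{+4}$ for progressive $A$ is seen to break down.
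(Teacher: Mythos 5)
Your part (1) is correct and is exactly the paper's (unstated) derivation: the paper presents Corollary~\ref{coro1} as an immediate consequence of Theorem~\ref{maintheorem}~(1), and your observation that when $2^{\kappa}$ is singular (hence not regular) every $\lambda\in(2^{\kappa})^{+}\cap\reg$ satisfies $\lambda^{+}\le 2^{\kappa}$, so $\lambda^{+}$ is again in $\pcf{\kappa\cap\reg}$, is the intended two-line argument.

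Part (2), however, has a genuine gap at precisely the step you deferred (``which I would then use to conclude''). Writing $2^{\kappa}=\aleph_{\delta}$, your counting is right that $|\pcf{\kappa\cap\reg}|=|(2^{\kappa})^{+}\cap\reg|=|\delta|$: there are $\kappa$ regulars below $\kappa$, and the regular cardinals in $[\kappa,2^{\kappa}]$ number at least $|\delta|$ (the successor cardinals) and at most $|\delta|$ (every one is $\aleph_{\gamma}$ for some $\gamma\le\delta$). But the hypothesis $2^{\kappa}>\kappa^{+(\kappa^{+4})}=\aleph_{\kappa^{+4}}$ gives only the \emph{ordinal} inequality $\delta>\kappa^{+4}$, which does not imply the \emph{cardinal} inequality $|\delta|>\kappa^{+4}$. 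Concretely, if $2^{\kappa}=\kappa^{+(\kappa^{+4}+1)}$ (consistent with measurability of $\kappa$ relative to, e.g., an indestructibly supercompact cardinal, the very technique the paper uses for Corollary~\ref{coro2}), then $\delta=\kappa^{+4}+1$, so $|\pcf{\kappa\cap\reg}|=\kappa^{+4}=|\kappa\cap\reg|^{+4}$ and the strict inequality fails. What the stated hypothesis actually yields is the non-strict bound $|\pcf{\kappa\cap\reg}|\ge|\kappa\cap\reg|^{+4}$, which already suffices to violate Shelah's bound $|\pcf{A}|<|A|^{+4}$ and is all that the paper's later applications need; to obtain the strict inequality one must strengthen the hypothesis to, say, $2^{\kappa}\ge\kappa^{+(\kappa^{+5})}$, which forces $|\delta|\ge\kappa^{+5}$. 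Since the paper gives no proof of the corollary, this off-by-one imprecision is arguably in the statement itself; but as a proof of the literal statement, your final inference does not go through, and no bookkeeping can repair it without adjusting the hypothesis or the conclusion.
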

     
The following corollary of Theorem~\ref{maintheorem}~(2) answers Question~\ref{question}:

     \begin{coro}\label{coro2}
          Suppose there is a supercompact cardinal. Then in some forcing extension
          there is a {non}-progressive interval of regular cardinals $A$
          such that ${\rm sup}(A)$ is singular, $\pcf{A}$ has no largest element 
          and $|\pcf{A}|>|A|^{+4}$.
     \end{coro}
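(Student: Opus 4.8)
The plan is to reduce the corollary to Theorem~\ref{maintheorem}~(2) by producing, from a supercompact cardinal, a model in which $\kappa$ is measurable while $2^{\kappa}$ is a large singular cardinal, and then passing to a Prikry extension. Concretely, I would aim for a ground model $W$ in which $\kappa$ is measurable and $2^{\kappa}=\lambda$ for some singular $\lambda$ with $\cf(\lambda)>\kappa$ and $\lambda>\kappa^{+(\kappa^{+4})}$. Granting such a $W$, let $G$ be generic for Prikry forcing over $\kappa$ and work in $W[G]$, setting $A=\kappa\cap\reg$. Since Prikry forcing adds no bounded subsets of $\kappa$ and is $\kappa^{+}$-cc, it preserves all cardinals $\le\kappa$ together with their cofinalities below $\kappa$, so $A$ is exactly the set of regular cardinals below $\kappa$ as computed in $W$; in particular $A$ is an interval of regular cardinals with $|A|=\kappa$ and $\min A\le\aleph_{1}$, hence $A$ is non-progressive, and $\sup A=\kappa$ is singular in $W[G]$ since $\cf(\kappa)=\omega$ there.

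The pcf computation is then immediate from Theorem~\ref{maintheorem}~(2): $\pcf{A}=(2^{\kappa})^{+}\cap\reg$ in $W[G]$. A standard counting shows Prikry forcing preserves $2^{\kappa}$, since every subset of $\kappa$ in $W[G]$ has a nice name built from $\kappa$-many antichains each of size $\le\kappa$, whence there are at most $(2^{\kappa})^{\kappa}=2^{\kappa}$ such names; thus $2^{\kappa}=\lambda$ remains singular and $>\kappa^{+(\kappa^{+4})}$ in $W[G]$. Now the computation underlying Corollary~\ref{coro1} applies verbatim: $(2^{\kappa})^{+}\cap\reg$ is the set of regular cardinals $\le 2^{\kappa}$, which has no largest element precisely because $\lambda$ is singular, and whose cardinality is the number of regular cardinals $\le\lambda$, which exceeds $\kappa^{+4}=|A|^{+4}$ because $\lambda>\kappa^{+(\kappa^{+4})}$. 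This yields all three assertions of the corollary.

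It remains to build $W$, which is where the supercompactness is used. Starting from a supercompact $\kappa$, I would first force with Laver's preparation so that the supercompactness of $\kappa$ becomes indestructible under $\kappa$-directed closed forcing, arranging by suitable bookkeeping that GCH holds at and above $\kappa$. Fix a singular $\lambda$ with $\cf(\lambda)=\kappa^{+}$ and $\lambda>\kappa^{+(\kappa^{+4})}$, and force with $\operatorname{Add}(\kappa,\lambda)$. This poset is $\kappa$-directed closed, so $\kappa$ remains supercompact, hence measurable; and since $\lambda^{<\kappa}=\lambda$ and $\lambda^{\kappa}=\lambda$ (using $\cf(\lambda)>\kappa$ together with GCH), it forces $2^{\kappa}=\lambda$ with $\lambda$ singular. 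This is the desired $W$.

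The main obstacle is precisely this last step: one must make $2^{\kappa}$ singular while keeping $\kappa$ measurable, and K\"onig's theorem forces $\cf(2^{\kappa})>\kappa$, so the singularizing forcing must raise $2^{\kappa}$ to a value of cofinality above $\kappa$. Laver indestructibility is what lets a measurable (indeed supercompact) cardinal survive the $\kappa$-directed closed blow-up $\operatorname{Add}(\kappa,\lambda)$, while choosing $\cf(\lambda)>\kappa$ both respects K\"onig's theorem and guarantees that $\operatorname{Add}(\kappa,\lambda)$ computes $2^{\kappa}=\lambda$ cleanly. Once $W$ is in hand, the remainder is bookkeeping together with the two cited results.
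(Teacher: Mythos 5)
Your proposal is correct and takes essentially the same route as the paper: Laver indestructibility from the supercompact, a $\kappa$-directed closed forcing making $2^{\kappa}$ a singular cardinal above $\kappa^{+(\kappa^{+4})}$ while $\kappa$ stays supercompact (hence measurable), and then Prikry forcing together with Theorem~\ref{maintheorem}~(2); your explicit $\operatorname{Add}(\kappa,\lambda)$ construction with $\cf(\lambda)>\kappa$ merely fills in the step the paper treats as standard, and your preservation and pcf computations match the paper's. One caveat, inherited from the paper itself rather than introduced by you: $\lambda>\kappa^{+(\kappa^{+4})}$ only guarantees that there are at least $\kappa^{+4}$ regular cardinals $\le 2^{\kappa}$, hence only $|\pcf{A}|\ge|A|^{+4}$; for the strict inequality one should choose $\lambda$ with more than $\kappa^{+4}$ cardinals below it, e.g.\ $\lambda=\kappa^{+(\kappa^{+5})}$ (which is singular of cofinality $\kappa^{+5}>\kappa$), a change your construction accommodates with no other modification.
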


The proof of Corollary~\ref{coro2} is as follows.
Let $\kappa$ be a supercompact cardinal.
We may assume that $\kappa$ is indestructibly supercompact 
in the sense of Laver~\cite{laver}. 
This enables us to get a model in which $\kappa$ is supercompact and
$2^{\kappa}$ is a singular cardinal $> \kappa^{+(\kappa^{+4})}$.
Finally,
Prikry forcing gives a model in which $A=\kappa \cap \reg$ is as desired
by Theorem~\ref{maintheorem}~(2).
See Corollary~\ref{coro34} for an additional property of the final model.

Some large cardinal hypothesis is necessary in Theorem~\ref{maintheorem}.
Assume on the contrary that
GCH holds and there is no weakly inaccessible cardinal.
A simple argument shows that if $A \subseteq \reg$, then 
$\pcf{A} = A \cup \{(\sup B)^{+}\mid B \subseteq A$ has no maximal element$\}$,
so that
$\pcf{A}$ has a largest element and $|\pcf{A}|<|A|^{+4}$.
      
The structure of this paper is as follows. 
In Section~2, we recall basic facts of pcf theory and Prikry forcing. 
Theorem~\ref{maintheorem} is proved in Section~3. 
We also consider the problem whether $\pcf{\pcf{A}}=\pcf{A}$ holds. 
In Section~4, we prove an analogue of Theorem~\ref{maintheorem}~(2) 
for Magidor forcing.

    \section{Preliminaries}
    In this section, we recall basic facts of pcf theory and Prikry forcing. 
    For more on the topics, we refer the reader to \cite{AM} and \cite{gitik} respectively.
    We also use \cite{kanamori} as a reference for set theory in general.
    
  Our notation is standard. We let $\reg$ denote the class of all regular cardinals.
  Let $A \subseteq \reg$. 
  Then $\prod A$ is the set 
  $\{f:A \to \bigcup A \mid \forall \gamma \in A(f(\gamma)< \gamma )\}$. 
 Let $F$ be a filter over $A$. We define a strict order $<_{F}$ on $\prod A$ 
  by $f <_{F} g$ iff 
   $\{\gamma \in A \mid f(\gamma) < g(\gamma)\}\in F$.
    
 \begin{defi}
 For $A\subseteq \reg$,
$$\pcf{A} = \left\{\cf\left(\prod A,<_{D}\right) \mid D\text{ is an ultrafilter over }A
\right\}.$$
 \end{defi}
 
Note that
$A \subseteq \pcf{A} \subseteq (2^{\sup A})^{+}\cap \reg$.
 If there is an increasing and cofinal sequence in 
 ${\left(\prod A,<_{F}\right)}$ of length $\theta$ for some filter $F$ over $A$, 
 then $\cf(\theta) \in \pcf{A}$. 
 
  A set $A\subseteq \reg$ is \emph{progressive} if $\min A > |A|$. 
  An interval of regular cardinals is a set of the form 
  $[\lambda,\kappa)\cap \reg$
  for a pair of cardinals $\lambda<\kappa$.
  Here is the fundamental theorem on progressive intervals of regular cardinals.
 \begin{thm}[Shelah]\label{progressive}
 If $A\subseteq \reg$ is a progressive interval, then we have
  \begin{enumerate}
   \item[$(1)$] $\pcf{A}$ has a largest element.
   \item[$(2)$] $|\pcf{A}| < |A|^{+4}$.
   \item[$(3)$] $\pcf{\pcf{A}} = \pcf{A}$.
\end{enumerate}
 \end{thm}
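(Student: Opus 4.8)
The plan is to build the standard pcf structure theory for the progressive set $A$ and read off the three conclusions from it; progressivity ($\min A>|A|$) is exactly the hypothesis that makes exact upper bounds available, and without it the machinery collapses. First I would set up the ideals governing the cofinalities: for a regular $\lambda$ put
$$J_{<\lambda}=\{B\subseteq A \mid \pcf{B}\subseteq\lambda\}.$$
The first substantial step is \emph{additivity}, $\pcf{B_0\cup B_1}=\pcf{B_0}\cup\pcf{B_1}$, which makes each $J_{<\lambda}$ a proper ideal on $A$ (properness for $\lambda\in\pcf{A}$ because then $\lambda$ itself witnesses $A\notin J_{<\lambda}$).

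Next comes the engine of the theory, the trichotomy theorem: for a $<_{J_{<\lambda}}$-increasing sequence $\langle f_\alpha \mid \alpha<\lambda\rangle$ in $\prod A$ with $\cf(\lambda)>|A|$, the ``good'' case holds and the sequence has an exact upper bound. From exact upper bounds one obtains \emph{universal sequences}, and then a \emph{generator} $B_\lambda\subseteq A$ for each $\lambda\in\pcf{A}$, characterized by $J_{<\lambda^{+}}=J_{<\lambda}+B_\lambda$. The payoff is the reduction of every cofinality to membership in generators: for each ultrafilter $D$ on $A$ one has $\cf\left(\prod A,<_{D}\right)=\min\{\lambda\in\pcf{A}\mid B_\lambda\in D\}$.

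With this in hand, (1) and (3) are short. For (1), set $\mu=\sup\pcf{A}$. Then $J_{<\mu}=\bigcup_{\nu<\mu}J_{<\nu}$ is a proper ideal with $A\notin J_{<\mu}$ (as $\pcf{A}\not\subseteq\nu$ for $\nu<\mu$), so I extend its dual filter to an ultrafilter $D$ with $D\cap J_{<\mu}=\emptyset$. By the generator characterization $\cf\left(\prod A,<_{D}\right)=\min\{\lambda\mid B_\lambda\in D\}\ge\mu$, since $B_\nu\in J_{<\nu^{+}}\subseteq J_{<\mu}$ for $\nu<\mu$ rules out smaller values; as this value lies in $\pcf{A}\subseteq\mu+1$, it equals $\mu$, so $\mu=\max\pcf{A}$. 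For (3), the inclusion $\pcf{A}\subseteq\pcf{\pcf{A}}$ is immediate from $X\subseteq\pcf{X}$. The reverse inclusion is \emph{idempotence}, which I would derive from the compactness (localization) of generators: any $\theta\in\pcf{\pcf{A}}$ is realized by an ultrafilter on $\pcf{A}$, and pulling the generators of $\pcf{A}$ back to $A$ exhibits $\theta$ as a cofinality realized already on $A$, so $\theta\in\pcf{A}$.

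The main obstacle is the cardinality bound (2). Here I would pass to elementary submodels $M\prec (H(\Theta),\in)$ with $|M|=|A|$, $A\cup\{A\}\subseteq M$, and $M$ capturing a \emph{smooth, closed} transitive generating sequence, and study the characteristic function $\chi_M\in\prod A$ given by $\chi_M(\gamma)=\sup(M\cap\gamma)$. The crux is to bound the order type of $\pcf{A}$ by ruling out a $\subseteq$-increasing (mod $J_{<\lambda}$) chain of generators of length $|A|^{+4}$, using the representation of $\chi_M$ through finitely many generators together with a rank/counting argument. I expect essentially all the difficulty to concentrate here: once the generator theory and the localization theorem are in place, (1) and (3) follow quickly, whereas (2) requires the full strength of the representation theorem and the delicate length estimate on generating sequences.
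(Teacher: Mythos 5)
The first thing to say is that the paper does \emph{not} prove Theorem~\ref{progressive}: it is quoted as background, attributed to Shelah, with the proofs living in \cite{AM} and \cite{shelah}. So there is no in-paper argument to compare yours against; what can be judged is whether your outline would stand on its own. As an outline, it is a faithful table of contents for the standard development: the ideals $J_{<\lambda}$, exact upper bounds via trichotomy, generators $B_\lambda$ with $J_{<\lambda^{+}} = J_{<\lambda} + B_\lambda$, the formula $\cf\left(\prod A, <_{D}\right) = \min\{\lambda \mid B_\lambda \in D\}$, localization for (3), and elementary submodels with characteristic functions for (2). Your deduction of (1) from the generator calculus is correct, with the cosmetic repair that in mid-proof you should work with the ideal $\bigcup_{\nu<\mu}J_{<\nu}$ directly (its properness, which is all you use, follows since each $J_{<\nu}$ is proper; the identity $J_{<\mu}=\bigcup_{\nu<\mu}J_{<\nu}$ at limit $\mu$ is itself a theorem, not a definition). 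But a proof this is not: $\lambda$-directedness of $\prod A / J_{<\lambda}$, trichotomy, existence of generators, the localization theorem, and the length bound on generating sequences in (2) --- which you yourself say carry ``essentially all the difficulty'' --- are precisely the substance of Shelah's theorem, and the proposal proves none of them.

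Beyond the wholesale deferral, two concrete points need repair even at outline level. First, the good case of trichotomy requires $\cf(\lambda) > |A|^{+}$, not $\cf(\lambda) > |A|$ as you state; the borderline case $\lambda = |A|^{+}$ is not excluded by progressivity (it occurs exactly when $\min A = |A|^{+}$) and must be handled separately --- there $\lambda = \min A \in A$ and the generator is trivially $\{\min A\}$. Second, and more seriously, your argument for (3) applies the generator machinery to $\pcf{A}$ itself, but nothing guarantees that $\pcf{A}$ is progressive even when $A$ is a progressive interval: (2) only gives $|\pcf{A}| < |A|^{+4}$, while $\min \pcf{A} = \min A$ may be as small as $|A|^{+}$. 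That pcf theory can fail badly on non-progressive sets is the very theme of the surrounding paper, so this is not a pedantic worry; as written, step (3) is circular in the problematic case. The standard fix for intervals is a splitting trick: there are at most three regular cardinals in $[|A|^{+}, |A|^{+4})$, so $\pcf{A}$ decomposes into a finite piece and a tail $B = \pcf{A}\setminus |A|^{+4}$ which \emph{is} progressive; an ultrafilter on $\pcf{A}$ is then either principal (giving an element of $\pcf{A}$ outright) or concentrates on $B$, where localization applies and yields $\pcf{B}\subseteq\pcf{A}$. Your outline should be amended to route (3) through this decomposition.
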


 Theorem~\ref{scaleexists} is known as the scale theorem. 
 \begin{thm}[Shelah]\label{scaleexists}
 Suppose $\kappa$ is a singular cardinal. 
  Then there is a set 
  $A \in [\kappa\cap \reg]^{\cf(\kappa)}$ such that $\sup A = \kappa$ and ${\left(\prod A,<_F\right)}$ has an increasing and cofinal sequence of length $\kappa^{+}$. Here, $F$ is the cobounded filter over $A$.
  In particular, $\kappa^{+} \in \pcf{\kappa \cap \reg}$. 
 \end{thm}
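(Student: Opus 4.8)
The plan is to produce $\kappa^{+}$ as a true cofinality of the product of a sparse cofinal sequence of regular cardinals and then push it up to an ultrafilter over all of $\kappa\cap\reg$. First I would set $\mu=\cf(\kappa)$ and fix a strictly increasing sequence $\langle\lambda_{i}\mid i<\mu\rangle$ of regular cardinals cofinal in $\kappa$ with $\lambda_{0}>\mu$ and $\lambda_{i}>\sup_{j<i}\lambda_{j}$ for all $i$; then $A_{0}=\{\lambda_{i}\mid i<\mu\}$ is progressive of size $\mu=\cf(\kappa)$. Since $\mu<\lambda_{0}$ and each $\lambda_{i}$ is regular, the condition $\lambda_{i}>\sup_{j<i}\lambda_{j}$ is automatic at limits, so this sequence is easy to arrange. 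The goal then becomes finding a cofinal $A\subseteq A_{0}$ together with a $<_{F}$-increasing cofinal sequence of length $\kappa^{+}$ in $\prod A$, where $F$ is the cobounded filter.

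The first step is to check that $(\prod A_{0},<_{F})$ is $\kappa^{+}$-directed. Given functions $\langle f_{\alpha}\mid\alpha<\kappa\rangle$, I would define $g(i)=\sup\{f_{\alpha}(i)+1\mid\alpha<\sup_{j<i}\lambda_{j}\}$; because this supremum ranges over fewer than $\lambda_{i}$ ordinals, each below $\lambda_{i}$, and $\lambda_{i}$ is regular, we get $g\in\prod A_{0}$, and $f_{\alpha}<_{F}g$ for every $\alpha$ since $\sup_{j<i}\lambda_{j}>\alpha$ for all large $i$. Hence $\cf(\prod A_{0},<_{F})\ge\kappa^{+}$. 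Using directedness to pass limit stages, I can then build a $<_{F}$-increasing sequence $\langle f_{\xi}\mid\xi<\kappa^{+}\rangle$; the cleanest presentation takes $f_{\xi}(i)=\sup(N_{\xi}\cap\lambda_{i})$ for a continuous $\in$-chain $\langle N_{\xi}\mid\xi<\kappa^{+}\rangle$ of elementary submodels of some $H(\chi)$, each of size $\mu$ with $A_{0}\in N_{0}$, $\mu\subseteq N_{0}$, and $N_{\xi}\in N_{\xi+1}$, for which elementarity makes the sequence strictly increasing and keeps each $f_{\xi}$ in $\prod A_{0}$ (as $|N_{\xi}\cap\lambda_{i}|\le\mu<\lambda_{i}$).

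The crux, and the step I expect to be the main obstacle, is to force this sequence to be \emph{cofinal} in $(\prod A_{0},<_{F})$, equivalently to make the true cofinality exactly $\kappa^{+}$ rather than some larger regular cardinal in $\pcf{A_{0}}$. Here I would invoke Shelah's exact upper bound theory: a $<_{F}$-increasing sequence of length $\kappa^{+}$, a regular cardinal above $|A_{0}|=\mu$, over the progressive set $A_{0}$ admits, via the trichotomy theorem, an exact upper bound $g$ with $g(i)\le\lambda_{i}$. If $g=_{F}\langle\lambda_{i}\mid i<\mu\rangle$, then every $h\in\prod A_{0}$ satisfies $h<_{F}g$ and hence $h<_{F}f_{\xi}$ for some $\xi$, so the sequence is a scale. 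The difficulty is to rule out the alternative in which $B=\{i\mid g(i)<\lambda_{i}\}$ is $F$-positive; the standard remedy is to pass to the product of the smaller regular cardinals $\cf(g(i))$ for $i\in B$ and repeat, and the real work lies in organizing this reduction so that it terminates with an index set still of order type $\mu$ and cofinal in $\kappa$, which is exactly where the deep pcf content of the theorem sits. Once a scale of length $\kappa^{+}$ is obtained over some cofinal $A\subseteq A_{0}$, I would finish by extending the cobounded filter on $A$ to an ultrafilter $D$, observing that the true cofinality is preserved as $\cf(\prod A,<_{D})=\kappa^{+}$, and then extending $D$ to an ultrafilter over $\kappa\cap\reg$ concentrating on $A$, which yields $\kappa^{+}\in\pcf{\kappa\cap\reg}$.
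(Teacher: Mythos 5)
First, a point of comparison: the paper does not prove this statement at all --- it is quoted as Shelah's scale theorem (with \cite{AM}, \cite{shelah} as references) and used as a black box, so there is no in-paper argument to measure your proposal against. Judged on its own, your proposal correctly executes the routine parts: the choice of a progressive cofinal $A_{0}$, the $\kappa^{+}$-directedness computation (the $\sup$-trick with $g(i)=\sup\{f_{\alpha}(i)+1\mid\alpha<\sup_{j<i}\lambda_{j}\}$ is exactly right), and the final passage from a true cofinality modulo the cobounded filter to an ultrafilter over $\kappa\cap\reg$. But the proposal is not a proof, because the step you yourself flag as ``the main obstacle'' --- producing an exact upper bound whose pointwise cofinalities form a set cofinal in $\kappa$ of order type $\cf(\kappa)$, or organizing the reduction so it terminates --- is precisely the entire content of Shelah's theorem. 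Everything before and after it is soft; deferring that step to ``exact upper bound theory'' without carrying it out leaves the theorem unproved.

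Two of the steps you do spell out are also wrong as written. First, the trichotomy theorem does not hand you an exact upper bound for an arbitrary $<_{F}$-increasing sequence of length $\kappa^{+}$: ruling out the Bad/Ugly alternatives for arbitrary sequences requires $\cf(\delta)>2^{|A_{0}|}=2^{\mu}$, and $2^{\mu}\geq\kappa^{+}$ is entirely possible (e.g.\ $\kappa=\aleph_{\omega}$ with $2^{\aleph_{0}}$ large), so in general one must instead \emph{construct} the sequence to be good at enough points (via club guessing or similar machinery) --- this is not a cosmetic refinement but where the work lives. Second, your ``cleanest presentation'' of the increasing sequence cannot exist: a continuous $\subseteq$-increasing chain $\langle N_{\xi}\mid\xi<\kappa^{+}\rangle$ of models \emph{each of size $\mu$} is impossible, since at a limit stage $\delta$ with $|\delta|>\mu$ (say $\delta=\mu^{+}<\kappa^{+}$) continuity forces $|N_{\delta}|\geq|\delta|>\mu$; indeed, if such a chain existed, your own computation would make $\xi\mapsto\sup(N_{\xi}\cap\lambda_{i})$ strictly increasing along $\kappa^{+}$ steps inside an ordinal $\lambda_{i}<\kappa^{+}$, a contradiction. (The increasing sequence itself should simply be built by transfinite recursion from $\kappa^{+}$-directedness; elementary submodels enter, in the correct proofs, only in establishing goodness/existence of exact upper bounds.) So the outline has the right overall shape --- it is the standard eub strategy --- but the deep step is missing and the two detailed steps supporting it would fail.
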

 
 Next, we recall basic facts of Prikry forcing from \cite{prikry}. 
 Let $\kappa$ be a measurable cardinal and $U$ a normal ultrafilter over $\kappa$. Prikry forcing $\mathbb{P}$ is the set $[\kappa]^{<\omega} \times U$ ordered by 
 ${\langle b,Y\rangle}\leq {\langle a,X\rangle}$ iff $a\subseteq_{e}b$ 
 (i.e. $a=b \cap (\max(a)+1)$), $Y \subseteq X$ and $b \setminus a \subseteq X$.

   $\mathbb{P}$ has the $\kappa^{+}$-c.c. and size $2^{\kappa}$. 
   Thus, $\mathbb{P}$ does not change the value of $2^{\theta}$ 
   for any ${\theta}\ge\kappa$. 
   $\mathbb{P}$ preserves all cardinals above ${\kappa}$
   but changes the cofinality of ${\kappa}$.
   Let $\dot{g}$ be a $\mathbb{P}$-name such that 
   $\mathbb{P}\force \dot{g} = \bigcup \{ a \mid \exists X({\langle a,X\rangle} \in \dot{G})\}$, where $\dot{G}$ is the canonical $\mathbb{P}$-name for a generic filter. 
   Then $\dot{g}$ is forced to be a cofinal subset of $\kappa$ of order type $\omega$.
   Moreover, we need
  \begin{itemize}
   \item 
   ${\langle a,X\rangle} \force {a} \subseteq_{e}\dot{g} \land \dot{g}\setminus{a} \subseteq {X}$.
   In particular, $\mathbb{P}\force \dot{g} \subseteq^{*} X$ for every $X\in U$.
   \item If $\langle a,X \rangle \force \xi \in \dot{g}$, then $\xi \in a$.
  \end{itemize} 
   The latter property follows by $\langle a,X\setminus (\xi + 1) \rangle \leq \langle a,X\rangle$ forces $\dot{g} \setminus a \subseteq X \setminus (\xi + 1)$. 

  For subsequent purposes, we present a direct proof of Prikry lemma. 
  Suppose $\{X_{b}\mid b \in [\kappa]^{<\omega}\}\subseteq U$. 
  The diagonal intersection $\triangle_{b}X_{b}$ is defined to be the set
  $\{\xi < \kappa \mid \forall b \in [\xi]^{<\omega}(\xi \in X_{b})\}$. 
  Since $U$ is normal, we have $\triangle_{b}X_{b} \in U$.

  \begin{lem}\label{diagonallemma}
   Suppose $\{X_{b} \mid b \in [\kappa]^{<\omega}\} \subseteq U$ 
   and $a \in [\kappa]^{<\omega}$. 
   Then any extension of ${\langle a,\triangle_{b}X_{b}\rangle}$ is compatible with ${\langle a,X_{a}\rangle}$.
  \end{lem}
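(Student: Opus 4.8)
The plan is to exhibit, for an arbitrary extension $\langle c,Z\rangle \leq \langle a,\triangle_{b}X_{b}\rangle$, an explicit common lower bound of $\langle c,Z\rangle$ and $\langle a,X_{a}\rangle$. The natural candidate is $\langle c, Z\cap X_{a}\rangle$: it retains the stem $c$ and merely shrinks the measure-one component to $Z\cap X_{a}$, which lies in $U$ because $U$ is closed under finite intersections. So the task reduces to checking that this single condition refines both of the given ones, and the whole argument rests on one membership computation.

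That $\langle c,Z\cap X_{a}\rangle \leq \langle c,Z\rangle$ is immediate, since the stems agree, $Z\cap X_{a}\subseteq Z$, and $c\setminus c=\emptyset$. For $\langle c,Z\cap X_{a}\rangle \leq \langle a,X_{a}\rangle$ I would unwind the definition of $\leq$: we already have $a\subseteq_{e}c$ from the hypothesis $\langle c,Z\rangle \leq \langle a,\triangle_{b}X_{b}\rangle$, and $Z\cap X_{a}\subseteq X_{a}$ is trivial, so the only substantive clause is $c\setminus a\subseteq X_{a}$. This is exactly where the diagonal intersection does its work. Fix $\xi\in c\setminus a$. Because $a\subseteq_{e}c$ means $a=c\cap(\max(a)+1)$, every element of $c$ outside $a$ strictly exceeds $\max a$ (vacuously so if $a=\emptyset$); hence $a\subseteq\xi$, i.e. $a\in[\xi]^{<\omega}$. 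On the other hand $\langle c,Z\rangle \leq \langle a,\triangle_{b}X_{b}\rangle$ gives $c\setminus a\subseteq\triangle_{b}X_{b}$, so $\xi\in\triangle_{b}X_{b}$, which by definition means $\xi\in X_{b}$ for every $b\in[\xi]^{<\omega}$. Taking $b=a$ yields $\xi\in X_{a}$, as desired.

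The computation is short, and I expect the only point requiring genuine care to be the matching of the index $a$ of the set $X_{a}$ with the membership condition defining $\triangle_{b}X_{b}$: one must verify that $a$ is a \emph{legitimate} index, i.e. $a\in[\xi]^{<\omega}$, for each newly added point $\xi\in c\setminus a$. This is precisely guaranteed by $a$ being an initial segment of $c$, so that all new points lie above $\max a$. This lemma is the combinatorial heart of the Prikry property, and the same mechanism — passing to $\triangle_{b}X_{b}$ so that any finitely many new points automatically land in the correspondingly indexed $X_{b}$ — is what I would expect to be reused in the direct proof of the Prikry lemma announced just before the statement.
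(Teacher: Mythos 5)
Your proposal is correct and is essentially identical to the paper's proof: the paper also takes an arbitrary extension $\langle c,Y\rangle\leq\langle a,\triangle_{b}X_{b}\rangle$, observes that $c\setminus a\subseteq X_{a}$ follows from $a\subseteq_{e}c$ together with $c\setminus a\subseteq\triangle_{b}X_{b}$, and exhibits $\langle c,Y\cap X_{a}\rangle$ as the common extension. The only difference is that you spell out the index-matching step ($a\in[\xi]^{<\omega}$ for each $\xi\in c\setminus a$) which the paper leaves implicit.
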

  \begin{proof}
   Let
   ${\langle c,Y\rangle}\leq {\langle a,\triangle_{b}X_{b}\rangle}$. 
   Then $c\setminus a\subseteq X_{a}$ by $a\subseteq_{e} c$ and
   $c\setminus a\subseteq \triangle_{b}X_{b}$. 
   Thus ${\langle c,Y\cap X_{a}\rangle}$ is a common extension of 
   ${\langle c,Y\rangle}$ and ${\langle a,X_{a}\rangle}$, as desired.
  \end{proof}
  
 \begin{lem}[Prikry lemma]\label{prikrycondition}
  Let $a \in [\kappa]^{<\omega}$ and $\sigma$ 
  be a statement of the forcing language. 
  Then there is an $X \in U$ such that ${\langle a,X\rangle}$ decides $\sigma$, i.e. ${\langle a,X\rangle }\force \sigma $ or ${\langle a,X\rangle}\force \lnot \sigma$.
 \end{lem}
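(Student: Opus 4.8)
The plan is to attach to every possible stem a single measure-one set that decides $\sigma$ as far as that stem allows, and then to amalgamate these sets by a diagonal intersection. Concretely, for each $b \in [\kappa]^{<\omega}$ I would fix $X_b \in U$, with $X_b \subseteq \kappa \setminus (\max b + 1)$, chosen so that $\langle b, X_b\rangle \force \sigma$ if some condition with stem $b$ forces $\sigma$; otherwise $\langle b, X_b\rangle \force \lnot\sigma$ if some condition with stem $b$ forces $\lnot\sigma$; and otherwise $X_b = \kappa$ (call $b$ \emph{undecided}). Put $X = \triangle_b X_b \in U$. The role of Lemma~\ref{diagonallemma}, applied with an arbitrary stem in place of $a$, is that every extension of $\langle b, X\rangle$ is compatible with $\langle b, X_b\rangle$; hence if $\langle b, X_b\rangle$ decides $\sigma$ then so does $\langle b, X\rangle$, with the same value. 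Thus it suffices to prove that $a$ is not undecided, for then $\langle a, X_a\rangle$ (equivalently $\langle a, X\rangle$) is the required condition.

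The engine of the argument is a propagation step: if $Y \in U$, $Y \subseteq X \setminus (\max b + 1)$, and $\langle b \cup \{\xi\}, X\rangle \force \sigma$ for every $\xi \in Y$, then $\langle b, Y\rangle \force \sigma$ (and symmetrically for $\lnot\sigma$). This is a short compatibility computation: any extension $\langle d, Z\rangle \le \langle b, Y\rangle$ forcing $\lnot\sigma$ is compatible with $\langle b \cup \{\xi\}, X\rangle$ for a suitable $\xi \in Y$ (namely $\xi = \min(d \setminus b)$ if $d \neq b$, or any $\xi \in Z$ if $d = b$), which is absurd since those two conditions force contradictory statements. Combining this with the previous paragraph and the $\kappa$-completeness of $U$, I would $3$-color each $\xi \in X$ above $\max b$ by the type ($\sigma$, $\lnot\sigma$, or undecided) of $b \cup \{\xi\}$; one color class $Y$ lies in $U$. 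If the $\sigma$-class or the $\lnot\sigma$-class is in $U$, propagation makes $\langle b, Y\rangle$ decide $\sigma$, so $b$ is not undecided. Consequently, if $b$ \emph{is} undecided then $W_b := \{\xi \in X : \xi > \max b,\ b \cup \{\xi\}\text{ is undecided}\} \in U$.

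The main obstacle is termination: an undecided stem has measure-one many undecided one-step extensions, and since stems can be arbitrarily long there is no base case for a naive induction on stem length. I would resolve this by a second diagonal intersection together with a genericity argument. Assume toward a contradiction that $a$ is undecided, and set $X' = \triangle_b W_b \in U$ (with $W_b = \kappa$ when $b$ is not undecided). A straightforward induction on $|b \setminus a|$, using that $\xi \in X'$ with $\xi > \max b$ forces $\xi \in W_b$, shows that every stem $b$ with $a \subseteq_e b$ and $b \setminus a \subseteq X'$ is undecided. Hence no extension $\langle c, Z\rangle \le \langle a, X'\rangle$ can decide $\sigma$: such a $c$ is one of these stems, so it is undecided, yet $\langle c, Z\rangle \force \sigma$ (or $\lnot\sigma$) with $Z \in U$ would make $c$ of decided type. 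Finally I would take a generic $G$ containing $\langle a, X'\rangle$; in $V[G]$ the sentence $\sigma$ has a truth value, so some condition in $G$ below $\langle a, X'\rangle$ decides $\sigma$, contradicting the previous sentence. Therefore $a$ is decided, and $\langle a, X\rangle$ is as required.
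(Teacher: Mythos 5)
Your proof is correct, and it is built from the same raw materials as the paper's: your chosen deciders $X_b$ are the paper's $Y_b$, your three-colouring of one-point extensions is the paper's partition into $X_b^{+}$, $X_b^{-}$, $X_b^{0}$, and both arguments run on Lemma~\ref{diagonallemma} applied at arbitrary stems together with the same compatibility computations (your propagation step is essentially the contrapositive of the paper's key step). Where you genuinely differ is in the shape of the induction. The paper argues directly and downward: by density some extension $\langle c,Y\rangle\le\langle a,X\rangle$ decides $\sigma$; the key step shows that if $\langle b\cup\{\xi\},X\rangle$ decides $\sigma$ with $\xi\in X$, then $\langle b,X\rangle$ decides it the same way; and a finite induction peels the stem down to $a$, so the single diagonal intersection $X=\triangle_{b}(X_{b}\cap Y_{b})$ is itself the required set. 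You argue upward and by contradiction: an undecided stem has $U$-many undecided one-point extensions, so a second diagonal intersection $X'=\triangle_{b}W_{b}$ yields a condition below which every stem is undecided, contradicting the density of conditions deciding $\sigma$ (your detour through a generic filter can be replaced by this ZFC density fact, which the paper also uses, implicitly, to obtain its initial deciding extension). The paper's arrangement is more economical --- one diagonal intersection, no contradiction, and the deciding set exhibited explicitly --- while yours isolates the combinatorial heart of the matter (undecidedness reproduces on a measure-one set) as a clean standalone claim, at the price of an indirect conclusion: you learn that $\langle a,X_{a}\rangle$ must already have decided $\sigma$, without the construction telling you which way.
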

 \begin{proof}

  For each $b \in [\kappa]^{<\omega}$ define $X_{b} \in U$ as follows:
  If $a\subseteq_{e} b$, let $X_{b}$ be the unique set from the following 
  mutually disjoint sets
\begin{itemize}
 \item $X_{b}^{+} = \{\xi < \kappa \mid {b} \subseteq \xi \land
 \exists Y\in U ({\langle b\cup\{\xi\} ,Y \rangle} \force \sigma) \}$.
 \item $X_{b}^{-} = \{\xi < \kappa \mid {b} \subseteq \xi \land
 \exists Y\in U({\langle b\cup\{\xi\} ,Y \rangle} \force \lnot\sigma)\}$.
 \item $X_{b}^{0} = \kappa \setminus (X_{b}^{+} \cup X_{b}^{-})$.
\end{itemize}
  Otherwise, let $X_{b}= \kappa$.
For each $b \in [\kappa]^{<\omega}$ define $Y_{b} \in U$ as follows:
If there is a $Y \in U$ such that ${\langle b,Y\rangle}$ decides $\sigma$,
let $Y_{b}$ be one such $Y$. Otherwise, let $Y_{b}= \kappa$. 
We claim that $X= \triangle_{b}(X_{b}\cap Y_{b})\in U$ is as desired. 
Take an arbitrary extension ${\langle c,Y\rangle}\leq {\langle a,X\rangle}$ that decides $\sigma$. 
We may assume $c = b\cup\{\xi\}$ with $a \subseteq_{e} b \subseteq \xi$.
Note that
${\langle c,Y_{c}\rangle}$ decides $\sigma$.
We may assume ${\langle c,Y_{c}\rangle } \force \sigma$.
Then ${\langle c,\triangle_{b}Y_{b}\rangle}\force \sigma$
by Lemma~\ref{diagonallemma}. 
Thus ${\langle c,X \rangle} \le {\langle c,\triangle_{b}Y_{b}\rangle}$
forces $\sigma$.
We claim that ${\langle b,X\rangle} \force \sigma$,
which completes the proof by repeating the argument.

It suffices to show that any extension of ${\langle b,X\rangle}$ 
is compatible with a condition forcing $\sigma$.
Let ${\langle d,Z\rangle} \leq {\langle b,X\rangle}$.
We may assume $b \subsetneq_{e} d$.
Note that
$\xi \in X_{b}$ by $\xi \in X$, and hence $X_{b} = X_{b}^{+}$
by ${\langle b\cup\{\xi\},X\rangle}\force \sigma$. 
Let $\eta = \min(d\setminus b) \in X$.
Then $\eta \in X_{b} = X_{b}^{+}$, so
${\langle b\cup\{\eta\},Y\rangle}\force\sigma$
for some $Y$, and hence
${\langle b\cup\{\eta\},Y_{b\cup\{\eta\}}\rangle}\force\sigma$.
Note that
$d\setminus (b\cup\{\eta\}) \subseteq Y_{b\cup\{\eta\}}$ by
$b\cup\{\eta\} \subseteq_{e} d$ and
$d\setminus (b\cup\{\eta\}) \subseteq d\setminus b \subseteq X$.
Thus ${\langle d, Z\cap Y_{b\cup\{\eta\}}\rangle}$ 
is a common extension of
${\langle d,Z\rangle}$ and ${\langle b\cup\{\eta\},Y_{b\cup\{\eta\}}\rangle}$,
as desired.
  \end{proof}

 \begin{coro}
  $\mathbb{P}$ adds no new bounded subsets of $\kappa$. 
  In particular, $\mathbb{P}$ preserves all cardinals below $\kappa$.
 \end{coro}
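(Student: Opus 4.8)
The plan is to reduce everything to the Prikry lemma (Lemma~\ref{prikrycondition}) together with the $\kappa$-completeness of $U$. Let $\dot{A}$ be a $\mathbb{P}$-name and suppose some condition forces $\dot{A} \subseteq \gamma$ for a fixed $\gamma < \kappa$. Since such a bounded subset is completely determined by the membership statements ``$\xi \in \dot{A}$'' for $\xi < \gamma$, it suffices to find, below an arbitrary condition, a single condition deciding all of them; such a condition then forces $\dot{A}$ to equal a set lying in the ground model.

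First I would fix an arbitrary condition ${\langle a, X\rangle}$ forcing $\dot{A} \subseteq \gamma$. For each $\xi < \gamma$, apply Lemma~\ref{prikrycondition} to the statement ``$\xi \in \dot{A}$'' with stem $a$ to obtain a set $X_{\xi} \in U$ such that ${\langle a, X_{\xi}\rangle}$ decides ``$\xi \in \dot{A}$''. The crucial point is that the Prikry lemma lets us keep the stem $a$ fixed while $\xi$ varies; this is exactly what makes the next step possible. Since $\gamma < \kappa$ and $U$ is $\kappa$-complete, I would set $X^{*} = X \cap \bigcap_{\xi < \gamma} X_{\xi} \in U$, so that ${\langle a, X^{*}\rangle} \le {\langle a, X\rangle}$ and, because $X^{*} \subseteq X_{\xi}$ for every $\xi$, the condition ${\langle a, X^{*}\rangle}$ decides ``$\xi \in \dot{A}$'' for every $\xi < \gamma$.

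Now I would define $A = \{\xi < \gamma \mid {\langle a, X^{*}\rangle} \force \xi \in \dot{A}\}$, a set belonging to the ground model. By construction ${\langle a, X^{*}\rangle}$ forces $\dot{A} = A$. As ${\langle a, X\rangle}$ was arbitrary, the conditions forcing $\dot{A}$ to equal a ground-model set are dense below any condition forcing $\dot{A} \subseteq \gamma$; hence in any generic extension a bounded subset of $\kappa$ coincides with a member of the ground model. This proves the first assertion.

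For the second assertion, suppose toward a contradiction that some cardinal $\lambda < \kappa$ is collapsed. Then there is a new surjection from some $\mu < \lambda$ onto $\lambda$, which can be coded as a subset of a product of cardinals below $\kappa$ and hence, via a pairing function, as a bounded subset of $\kappa$. By the first assertion this subset already lies in the ground model, so the surjection does too, contradicting that $\lambda$ is a cardinal in the ground model. I expect the only genuine subtlety to be the interplay between the fixed-stem feature of the Prikry lemma and the $\kappa$-completeness of $U$; once these are combined as above, the remainder is routine.
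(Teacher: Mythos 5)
Your proof is correct, and it is exactly the standard derivation the paper intends: the corollary is stated without proof immediately after the Prikry lemma precisely because it follows from the fixed-stem form of that lemma (Lemma~\ref{prikrycondition}) combined with the $\kappa$-completeness of $U$, which is what you use. Both the density argument deciding all statements ``$\xi \in \dot{A}$'' for $\xi < \gamma$ below a single stem and the routine coding argument for cardinal preservation are carried out correctly.
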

 
   \section{Prikry Forcing and a Non-progressive Interval}
   The first half of this section is devoted to
   
   \begin{proof}[Proof of Theorem~\ref{maintheorem}]
  Let $\kappa$ be a measurable cardinal. 
  Take a normal ultrafilter $U$ over $\kappa$ and form
  $j:V \to M \simeq \text{Ult}(V,U)$.
  For each $\alpha\le2^{\kappa}$, we can choose 
    $f_{\alpha} \in \mbox{}^{\kappa}\kappa$ such that $\alpha =[f_{\alpha}]_{U}$
   by $2^{\kappa}\le(2^{\kappa})^{M}<j(\kappa)$.

    Note that
   ${\kappa \cap \reg}\subseteq \pcf{\kappa \cap \reg}\subseteq (2^{\kappa})^{+}\cap \reg$. To complete the proof, it suffices to show that 
   $[\kappa,(2^{\kappa})^{+})\cap\reg\subseteq \pcf{\kappa\cap\reg}$ in both cases, (1) and (2).

    (1) Let $\theta \in [\kappa,(2^{\kappa})^{+})\cap\reg$.
    Then we may assume $f_{\theta} \in \mbox{}^{\kappa}(\kappa\cap\reg)$.
    Since $\kappa=[{\rm id}]_{U}\le[f_{\theta}]_{U}$, we have
   $$X = \{\xi < \kappa\mid \forall \eta < \xi( f_{\theta}(\eta) < \xi)
   \land \xi \leq f_{\theta}(\xi)\} \in U.$$
   Note that
   $f_{\theta}\upharpoonright X$ is strictly increasing.
   Define an ultrafilter $U_{\theta}$ over $\kappa \cap \reg$ by 
   $Y \in U_{\theta}$ iff $f_{\theta}^{-1}``Y \in U$. 
Then we have
   ${\left(\prod_{\xi\in X}f_{\theta}(\xi),<_{U}\right)}\simeq
   {\left(\prod f_{\theta}``{X},<_{U_{\theta}}\right)}\simeq
   {\left(\prod \kappa\cap \reg,<_{U_{\theta}}\right)}$.
\medskip    

Since $\langle{f_{\alpha}\upharpoonright X \mid \alpha < \theta}\rangle$ is increasing and cofinal in ${\left(\prod_{\xi \in X}f_{\theta}(\xi),<_{U}\right)}$, 
we have
   $\theta =\cf\left(\prod_{\xi \in X}f_{\theta}(\xi),<_{U}\right)
   =\cf(\prod \kappa \cap \reg ,<_{U_{\theta}})\in \pcf{\kappa\cap \reg}$, as desired. 
\medskip
 
  (2)
  Let $\mathbb{P}$ be Prikry forcing defined by $U$.
  Note that the set $(\kappa,(2^{\kappa})^{+})\cap \reg$
  remains the same after forcing with $\mathbb{P}$ and $\kappa$ is singular.
   Let $\theta\in(\kappa,(2^{\kappa})^{+})\cap\reg$. 
  It suffices to prove that 
   $\mathbb{P}\force\theta\in \pcf{\kappa\cap\reg}$. Again, we may assume $f_\theta \in {^{\kappa}(\kappa \cap \reg)}$.
First, note that
      $$X = \{\xi < \kappa\mid \forall \eta < \xi( f_{\theta}(\eta) < \xi)
   \land \xi< f_{\theta}(\xi)\} \in U.$$
    Since $\mathbb{P} \force \dot{g}\subseteq^{*} {X}$, 
    we have
    \begin{center}
     $\mathbb{P} \force {\left(\prod_{\xi \in \dot{g}}{f}_{\theta}(\xi),<^{*}\right)} \simeq {\left(\prod {f}_{\theta}{``}\dot{g},<_{\dot{F}}\right)}.$    \end{center} 
     Here $<^{*}$ and $\dot{F}$ are $\mathbb{P}$-names 
     for the order on $\prod_{\xi \in \dot{g}}{f}_{\theta}(\xi)$
 defined by the cobounded filter over $\dot{g}$, and
     the cobounded filter over ${f}_{\theta}{``}\dot{g}$ respectively. 
     Thus it suffices to prove
 
        \begin{itemize}
    \item[(i)]
    $\mathbb{P}\force{\langle{f}_{\alpha}\upharpoonright \dot{g} \mid \alpha < {\theta}\rangle}${ is increasing in }${\left(\prod_{\xi \in \dot{g}} {f}_{\theta}(\xi),<^{*} \right)}$. 
    \item[(ii)]
    $\mathbb{P}\force{\langle{f}_{\alpha}\upharpoonright \dot{g} \mid \alpha < {\theta}\rangle}${ is cofinal in }${\left(\prod_{\xi \in \dot{g}} {f}_{\theta}(\xi),<^{*} \right)}$. 
   \end{itemize}

(i)
Let $\alpha < \beta$.
Then $Y= \{\xi < \kappa \mid f_{\alpha}(\xi) < f_{\beta}(\xi)\}\in U$. 
If ${\langle a,Z\rangle} \in \mathbb{P}$, then
${\langle a,Y \cap Z\rangle} \force\forall \xi\in\dot{g} \setminus{a} 
({f}_{\alpha} (\xi)<{f}_{\beta}(\xi))$, as desired. 
     
(ii)
   By the proof of (i), it suffices to show that 
   $\left\{h \upharpoonright \dot{g}\mid h\in \prod^{V}_{\xi \in{X}}{f}_{\theta}(\xi) \right\}$ 
is forced to be cofinal in 
${\left(\prod_{\xi \in \dot{g}} {f}_{\theta}(\xi),<^{*} \right)}$. 

Assume
$\force \dot{h} \in \prod_{\xi \in \dot{g}}{f}_{\theta}(\xi)$.
   For each $b\in [\kappa]^{<\omega}$
   define $Y_{b}\in U$ and $\eta_b < \kappa$ as follows.
   Note that ${\langle b, X\rangle}$ forces
   $b \subseteq\dot{g}$ and hence $\dot{h}(\max{{b}}) < f_{\theta}(\max{b})$.
     By Prikry lemma, there is a ${\langle b,Y_{b}\rangle}\le{\langle b, X\rangle}$ 
     that decides 
     $\dot{h}(\max{{b}}) ={\eta}$ for every $\eta < f_{\theta}(\max{b})$.
Then we can take an $\eta_b < f_{\theta}(\max{b})$ such that 
$${\langle b,Y_{b}\rangle}\force \dot{h}(\max{{b}})=\eta_b.$$
For each $\xi \in{X}$ define
     $$h(\xi) = \sup \{\eta_b+1 \mid b\in [\xi + 1]^{<\omega}\}.$$ 
Since ${f}_{\theta}(\xi)> \xi$ is regular,       
we have $h\in \prod_{\xi \in{X}}{f}_{\theta}(\xi)$ in $V$.
Let $Y = \triangle_{b}Y_{b}\in U$.
We claim that 
${\langle a, Y\rangle}\force \forall \xi\in\dot{g}\setminus a(\dot{h}(\xi)<{h}(\xi))$
for every $a\in [\kappa]^{<\omega}$,
which completes the proof.
It suffices to show that any extension of ${\langle a, Y\rangle}$
forcing $\xi\in\dot{g}\setminus a$ is compatible with a condition forcing
$\dot{h}(\xi)<{h}(\xi)$.

Suppose
${\langle b,Z\rangle}\le{\langle a,Y\rangle}$ forces $\xi\in\dot{g}\setminus a$.
By the property we saw in Section 2, we have $\xi\in b\setminus a$. ${\langle b,Z\rangle}$ is compatible with
${\langle b \cap (\xi+1),Y_{b\cap (\xi+1)}\rangle}$ forcing
$\dot{h}(\xi) = \eta_{b\cap(\xi+1)} < h(\xi)$, as in the proof of
Prikry lemma.
  \end{proof}

  Corollary~\ref{coro2} shows that the assumption of $A$ being progressive  
  is necessary in Theorem~\ref{progressive}~(1) and (2). 
  Corollary~\ref{coro34} does the same for Theorem~\ref{progressive}~(3).
   \begin{coro}\label{coro34}
    One can add ``$A\subsetneq \pcf{A}\subsetneq \pcf{\pcf{A}}$''
    to the list of properties of $A$ in Corollary~\ref{coro2}.
   \end{coro}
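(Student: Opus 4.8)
The plan is to work in the model produced in the proof of Corollary~\ref{coro2} and simply verify the two strict inclusions, applying the scale theorem (Theorem~\ref{scaleexists}) once at $\kappa$ and once at $2^{\kappa}$. Recall that in this model $A = \kappa \cap \reg$ and, by Theorem~\ref{maintheorem}~(2), $\pcf{A} = (2^{\kappa})^{+} \cap \reg$. The crucial observation is that, since $2^{\kappa}$ is singular, it is a limit cardinal lying outside $\reg$; hence $(2^{\kappa})^{+} \cap \reg$ coincides with $(2^{\kappa}) \cap \reg$, the set of all regular cardinals below $2^{\kappa}$. In particular $\sup \pcf{A} = 2^{\kappa}$, which is itself singular.

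For the first inclusion, recall that $\kappa$ is singular in the final model, so the scale theorem gives $\kappa^{+} \in \pcf{\kappa \cap \reg} = \pcf{A}$, whereas $\kappa^{+} \notin A$ because $\kappa^{+} > \kappa$. Thus $A \subsetneq \pcf{A}$.

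For the second inclusion, put $B = \pcf{A} = (2^{\kappa}) \cap \reg$, so that $\sup B = 2^{\kappa}$ is singular. Applying the scale theorem to the singular cardinal $2^{\kappa}$ yields $(2^{\kappa})^{+} \in \pcf{(2^{\kappa}) \cap \reg} = \pcf{B} = \pcf{\pcf{A}}$; but $(2^{\kappa})^{+} \notin B$ since $(2^{\kappa})^{+} > 2^{\kappa}$. Hence $\pcf{A} \subsetneq \pcf{\pcf{A}}$, and combining this with the previous paragraph we obtain $A \subsetneq \pcf{A} \subsetneq \pcf{\pcf{A}}$, as required.

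The only genuine content lies in the identity $\pcf{A} = (2^{\kappa}) \cap \reg$, which rests entirely on the singularity of $2^{\kappa}$: this is what deletes the non-regular top cardinal from $(2^{\kappa})^{+} \cap \reg$ and thereby makes $\sup \pcf{A}$ itself a singular cardinal, so that the scale theorem can be invoked one level up. No further pcf computation is needed beyond Theorem~\ref{maintheorem}~(2) and two applications of Theorem~\ref{scaleexists}; the main point is conceptual rather than technical, so I do not expect any serious obstacle.
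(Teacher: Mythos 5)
Your proof is correct and is essentially the same as the paper's: both reduce everything to the identity $\pcf{A}=(2^{\kappa})^{+}\cap\reg=2^{\kappa}\cap\reg$ (using singularity of $2^{\kappa}$) and then apply Theorem~\ref{scaleexists} at $2^{\kappa}$ to produce $(2^{\kappa})^{+}\in\pcf{\pcf{A}}\setminus\pcf{A}$. The only cosmetic difference is that your first application of the scale theorem (at $\kappa$) is unnecessary, since $\kappa^{+}\in(2^{\kappa})^{+}\cap\reg=\pcf{A}$ already follows from Theorem~\ref{maintheorem}~(2), which is how the paper gets $\pcf{A}\ne A$.
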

   
   \begin{proof}
   Let $A=\kappa \cap \reg$ in the final model for Corollary~\ref{coro2},
   where $2^{\kappa}$ is singular.
   By Theorem~\ref{maintheorem}~(2) we have
   $\pcf{A} = (2^{\kappa})^{+}\cap \reg = 2^{\kappa}\cap \reg \ne A$,
   which in turn implies that
   $(2^{\kappa})^{+} \in\pcf{\pcf{A}} \setminus\pcf{A}$ by Theorem~\ref{scaleexists}.
   \end{proof}
   The rest of this section is devoted to improving Corollary~\ref{coro34}. 
   Define $\operatorname{pcf}^{n}(A)$ for $n<\omega$ by 
   $\operatorname{pcf}^{0}(A) = A$ 
   and 
   $\operatorname{pcf}^{n+1}(A) = \operatorname{pcf}(\operatorname{pcf}^{n}(A))$.
   \begin{thm}\label{pcfinc}
   Suppose $\langle\kappa_{i} \mid i < \omega\rangle$ is an increasing sequence of supercompact cardinals. Then the following hold in some forcing extension:
    \begin{enumerate}
     \item[$(1)$] $\kappa_{0}$ is a singular cardinal of cofinality $\omega$.
     \item[$(2)$] $\operatorname{pcf}^{n}(\kappa_{0} \cap \reg) \subsetneq \operatorname{pcf}^{n+1}(\kappa_{0} \cap \reg)$ for every $n < \omega$.
    \end{enumerate}
   \end{thm}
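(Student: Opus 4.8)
The plan is to force the continuum function to ``leapfrog'' the sequence $\langle\kappa_n\rangle$. Concretely, I would aim for a model in which every $\kappa_n$ is singular of cofinality $\omega$ and $2^{\kappa_n}=\Theta_n$, where $\langle\Theta_n\mid n<\omega\rangle$ is an increasing sequence of singular cardinals fixed in advance so that
\[
\kappa_n<\Theta_{n-1}<\kappa_{n+1}\quad\text{and}\quad 2^{\Theta_{n-1}}=\Theta_n\qquad(n\ge 1),
\]
with $\Theta_0=2^{\kappa_0}\in(\kappa_1,\kappa_2)$ singular as the case $n=1$. These demands are engineered to make Theorem~\ref{maintheorem}~(2) applicable at \emph{every} $\kappa_n$ simultaneously: each $\kappa_n$ is to be a Prikry-singularized measurable carrying $2^{\kappa_n}=\Theta_n$, so that $\pcf{\kappa_n\cap\reg}=(2^{\kappa_n})^{+}\cap\reg=\Theta_n^{+}\cap\reg$ in the final model.

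Granting the configuration, statement~(2) follows by a short induction proving $\operatorname{pcf}^{n}(\kappa_0\cap\reg)=\Theta_{n-1}^{+}\cap\reg$ for $n\ge 1$. The base case is $\pcf{\kappa_0\cap\reg}=\Theta_0^{+}\cap\reg$, which is exactly Theorem~\ref{maintheorem}~(2). For the step, assume $\operatorname{pcf}^{n}(\kappa_0\cap\reg)=\Theta_{n-1}^{+}\cap\reg$. Since $\kappa_n<\Theta_{n-1}$ we have $\kappa_n\cap\reg\subseteq\Theta_{n-1}^{+}\cap\reg=\operatorname{pcf}^{n}(\kappa_0\cap\reg)$, so by monotonicity of $\operatorname{pcf}$ (immediate from the definitions: an ultrafilter $D$ on $A\subseteq B$ extends to $\{Y\subseteq B\mid Y\cap A\in D\}$ with the same quotient cofinality) and Theorem~\ref{maintheorem}~(2) at $\kappa_n$,
\[
\Theta_n^{+}\cap\reg=\pcf{\kappa_n\cap\reg}\subseteq\operatorname{pcf}^{n+1}(\kappa_0\cap\reg).
\]
For the reverse inclusion, $\sup(\Theta_{n-1}^{+}\cap\reg)=\Theta_{n-1}$ because $\Theta_{n-1}$ is singular, so $\operatorname{pcf}^{n+1}(\kappa_0\cap\reg)=\pcf{\Theta_{n-1}^{+}\cap\reg}\subseteq(2^{\Theta_{n-1}})^{+}\cap\reg=\Theta_n^{+}\cap\reg$. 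Hence $\operatorname{pcf}^{n+1}(\kappa_0\cap\reg)=\Theta_n^{+}\cap\reg$. All inclusions $\operatorname{pcf}^{n}\subseteq\operatorname{pcf}^{n+1}$ are automatic since $A\subseteq\pcf{A}$, and each is proper: $\kappa_0<\Theta_0$ gives $\kappa_0^{+}\in\operatorname{pcf}^{1}\setminus\operatorname{pcf}^{0}$, while for $n\ge 1$ the bound $\Theta_{n-1}^{+}\le\Theta_n$ yields $\Theta_{n-1}^{+}\in\Theta_n^{+}\cap\reg\setminus(\Theta_{n-1}^{+}\cap\reg)$.

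It remains to realize the configuration, and this is where the real work lies. The first point to face is that the construction \emph{cannot} be carried out stage by stage: making $2^{\kappa_{n-1}}=\Theta_{n-1}>\kappa_n$ destroys the inaccessibility of $\kappa_n$, so $\kappa_n$ can be measurable --- as is needed to singularize it and invoke Theorem~\ref{maintheorem}~(2) --- only \emph{before} $2^{\kappa_{n-1}}$ is enlarged past it. The resulting dependencies (singularize $\kappa_{n+1}$ before enlarging $2^{\kappa_n}$; enlarge $2^{\kappa_n}$ before singularizing $\kappa_n$) descend through $\omega$ with no least element, so no naive ordering works. I would therefore proceed globally: first apply a Laver-style preparation making every $\kappa_n$ indestructibly supercompact, then realize the leapfrog continuum together with the singularizations by a single reverse Easton iteration whose stage-$\kappa_n$ factor adds $\Theta_n$ subsets of $\kappa_n$ and adjoins a Prikry sequence to $\kappa_n$ (equivalently, an extender-based Prikry forcing doing both at once). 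Crucially, the identities $\pcf{\kappa_n\cap\reg}=\Theta_n^{+}\cap\reg$ would not be \emph{preserved} from an earlier model but \emph{verified in the extension} by repeating the proof of Theorem~\ref{maintheorem}~(2) with the ground-model embedding at $\kappa_n$, checking that the generic scale $\langle f_\alpha\upharpoonright\dot g_n\mid\alpha<\Theta_n\rangle$ in $\prod(\kappa_n\cap\reg)$ is increasing and cofinal in the full iteration.

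The main obstacle, as I see it, is twofold, and I expect both halves to be genuinely delicate. First, lifting each supercompactness embedding $j_n$ with critical point $\kappa_n$ through the part of the iteration living below $\kappa_n$ is \emph{not} covered by indestructibility, since that part contains $\mathrm{Add}(\kappa_{n-1},\Theta_{n-1})$ with $\Theta_{n-1}>\kappa_n$, a poset that is only $\kappa_{n-1}$-directed closed; I would handle it by a Woodin-style surgery on a candidate generic for $j_n(\mathrm{Add}(\kappa_{n-1},\Theta_{n-1}))$, using $\Theta_{n-1}$-supercompactness of $\kappa_n$ so that the relevant objects are captured by the target model. Second --- and this is the harder half --- one must show that enlarging $2^{\kappa_{n-1}}$ above $\kappa_n$ (forcing \emph{below} $\kappa_n$) does not spoil the cofinality computation at $\kappa_n$: the argument of Theorem~\ref{maintheorem}~(2) must be redone in the presence of the new subsets of cardinals below $\kappa_n$, verifying that no function in $\prod(\kappa_n\cap\reg)$ contributed by the low forcing escapes the scale. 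Establishing these two preservation facts simultaneously for all $n$, inside one iteration, is the crux of the proof.
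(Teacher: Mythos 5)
Your pcf arithmetic from the hypothesized configuration is fine, but the proof has a genuine gap exactly where you say ``the real work lies'': the configuration is never realized. You correctly observe that your plan requires singularizing $\kappa_{n+1}$ \emph{before} enlarging $2^{\kappa_n}$ past it, and enlarging $2^{\kappa_n}$ \emph{before} singularizing $\kappa_n$, a chain of dependencies descending through $\omega$ with no least element; but the proposed escape (one reverse Easton iteration interleaving Cohen and Prikry/extender-based forcing, with Woodin-style surgery to lift the embeddings, and a re-verification of the scale argument of Theorem~\ref{maintheorem}~(2) against all the forcing done below each $\kappa_n$) is only named, not carried out. Each of these steps is a serious technical project in its own right --- in particular Theorem~\ref{maintheorem}~(2) as stated applies to Prikry forcing over a measurable in the model where the forcing is performed, not to extender-based variants inside an iteration, so you would need to prove a new theorem, not reuse the old one. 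As submitted, the proposal is a reduction of the theorem to an unproved (and much harder) consistency statement.

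The paper's proof shows that this entire difficulty can be sidestepped: you do not need to singularize any cardinal except $\kappa_0$, and you never need $2^{\kappa_n}$ to leapfrog past $\kappa_{n+1}$. After Laver preparation with $2^{\kappa_i}=\kappa_i^{+}$, force with the \emph{full support product} $\mathbb{Q}=\prod_{i<\omega}\operatorname{Add}(\kappa_i,\kappa_{i+1})$. For each $n$, factor $\mathbb{Q}$ as (tail above $n$) $\times$ (initial part up to $n$). In the intermediate model given by the tail generic $G_n$, the cardinal $\kappa_{n+1}$ is still supercompact (indestructibility) and $2^{\kappa_{n+1}}=\kappa_{n+2}$, so Theorem~\ref{maintheorem}~(1) --- the ZFC statement about a measurable, requiring no Prikry forcing at all --- gives $\pcf{[\kappa_n^{+},\kappa_{n+1}^{+})\cap\reg}=[\kappa_n^{+},\kappa_{n+2}^{+})\cap\reg$ there; the $\kappa_{n+1}$-closure of the tail guarantees the computation below $\kappa_n^{+}$ is undisturbed. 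Then Lemma~\ref{ccpcf} (pcf from the ground model survives $\kappa$-c.c.\ forcing, $\kappa=\min A$) pushes this fact through the $\kappa_n^{+}$-c.c.\ initial part into the full extension $V[G]$, and the bound $\pcf{A}\subseteq(2^{\sup A})^{+}\cap\reg$ keeps it an equality. Finally a \emph{single} Prikry forcing at $\kappa_0$ (legitimate since $\mathbb{Q}$ is $\kappa_0$-directed closed and $\kappa_0$ is indestructible) gives $\pcf{\kappa_0\cap\reg}=\kappa_1^{+}\cap\reg$ by Theorem~\ref{maintheorem}~(2), and Lemma~\ref{ccpcf} again preserves the interval computations; induction then yields $\operatorname{pcf}^{n+1}(\kappa_0\cap\reg)=\kappa_{n+1}^{+}\cap\reg$. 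The moral you should take away is that the pcf increments at the higher levels can be certified by measurability alone in intermediate models and then \emph{transported} by closure and chain-condition preservation, rather than created by singularization in the final model; that is what dissolves the circular dependency you ran into.
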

   Lemma~\ref{ccpcf} ensures that sets of the form $\pcf{\theta \cap \reg}$ 
   remain the same throughout forcing extensions for Theorem~\ref{pcfinc}.
   \begin{lem}\label{ccpcf}
Suppose $A\subseteq \reg$, and
   $\mathbb{Q}$ has the $\kappa$-c.c. with $\kappa=\min (A)$.
   Then $\mathbb{Q}\force{\rm pcf}^{V}(A)\subseteq\pcf{A}$.
   \end{lem}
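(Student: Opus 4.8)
The plan is to show that for every $\theta\in\operatorname{pcf}^{V}(A)$, a scale witnessing $\theta\in\operatorname{pcf}^{V}(A)$ continues to witness $\theta\in\pcf{A}$ in the extension, after extending the relevant ultrafilter. Fix a $V$-generic $G\subseteq\mathbb{Q}$ and work in $V[G]$.

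First I would record two facts. (a) Every $\theta\in\operatorname{pcf}^{V}(A)$ satisfies $\theta\ge\kappa=\min(A)$: if $D\in V$ is an ultrafilter over $A$ and $B\subseteq\prod^{V}A$ is $<_{D}$-cofinal with $|B|<\kappa$, then the pointwise supremum $f$ of $B$ lies in $\prod A$ (each $\gamma\in A$ is regular and $>|B|$), so its pointwise successor $f+1\in\prod A$ satisfies $f+1>_{D}h$ for all $h\in B$, contradicting cofinality. Since $\mathbb{Q}$ has the $\kappa$-c.c., it preserves regular cardinals $\ge\kappa$; hence every $\gamma\in A$ and every $\theta\in\operatorname{pcf}^{V}(A)$ remains a regular cardinal in $V[G]$, so $A\subseteq\reg$ and $\pcf{A}$ make sense there. (b) The $\kappa$-c.c. yields a domination property: every $g\in\prod^{V[G]}A$ is dominated everywhere by some $f\in\prod^{V}A$. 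Indeed, fixing a name $\dot g$ and a maximal antichain deciding $\dot g(\gamma)$ for each $\gamma\in A$, the $\kappa$-c.c. bounds that antichain, hence the set of decided values, in size below $\kappa\le\gamma$; as $\gamma$ is regular these values are bounded below $\gamma$, and we let $f(\gamma)$ be such a bound.

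Now fix $\theta\in\operatorname{pcf}^{V}(A)$ and, in $V$, an ultrafilter $D$ over $A$ with a strictly $<_{D}$-increasing, $<_{D}$-cofinal sequence $\langle f_{\alpha}\mid\alpha<\theta\rangle$ in $(\prod^{V}A,<_{D})$. In $V[G]$ extend $D$ to any ultrafilter $D'\supseteq D$ over $A$. I claim $\langle f_{\alpha}\mid\alpha<\theta\rangle$ is $<_{D'}$-increasing and $<_{D'}$-cofinal in $(\prod^{V[G]}A,<_{D'})$. Monotonicity is immediate, since the relevant sets belong to $D\subseteq D'$. For cofinality, given $g\in\prod^{V[G]}A$, fact (b) provides $f\in\prod^{V}A$ with $g\le f$ pointwise; by cofinality of the scale in $V$ there is $\alpha$ with $f<_{D}f_{\alpha}$, and then $\{\gamma:g(\gamma)<f_{\alpha}(\gamma)\}\supseteq\{\gamma:f(\gamma)<f_{\alpha}(\gamma)\}\in D\subseteq D'$, so $g<_{D'}f_{\alpha}$.

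Thus $(\prod^{V[G]}A,<_{D'})$ carries a strictly increasing cofinal sequence of length $\theta$, so its cofinality equals $\cf^{V[G]}(\theta)=\theta$ by (a); hence $\theta\in\pcf{A}$ in $V[G]$, completing the proof. I expect the crux to be the domination property (b) together with the observation that it makes an \emph{arbitrary} extension $D'$ of $D$ work: the functions newly added by $\mathbb{Q}$ cannot escape the old scale because each is pointwise below a ground-model element of $\prod A$, so no genuinely new cofinal behavior appears. The regularity preservation in (a) is exactly what keeps $\theta$ itself, rather than some smaller cofinality, inside $\pcf{A}$.
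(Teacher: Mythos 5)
Your proof is correct and takes essentially the same route as the paper's: the crux in both is the $\kappa$-c.c.\ antichain-counting argument showing that every function in $\prod A$ of the extension is pointwise dominated by a ground-model function, so the ground-model scale remains cofinal, while preservation of the regularity of $\theta\ge\kappa$ keeps the cofinality equal to $\theta$. The only cosmetic difference is that you extend $D$ to an ultrafilter $D'$ in $V[G]$, whereas the paper works with the filter generated by $D$ and invokes the stated fact that an increasing cofinal $\theta$-sequence modulo a filter already witnesses $\cf(\theta)\in\pcf{A}$.
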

   \begin{proof}
    In $V$, let $\theta \in \pcf{A}$ be arbitrary. 
    Then there are an ultrafilter $D$ over $A$ and an increasing and cofinal
    sequence
    ${\langle f_{\alpha}\mid \alpha < \theta\rangle}$ in ${\left(\prod A,<_{D}\right)}$.
    Let $\dot{E}$ be a $\mathbb{Q}$-name for the filter generated by ${D}$.
 Since $\theta \geq \kappa$ remains regular after forcing with $\mathbb{Q}$,
    it suffices to prove that 
    ${\langle f_{\alpha}\mid \alpha < \theta\rangle}$ 
    is forced to be increasing and cofinal in ${\left(\prod{A},<_{\dot{E}}\right)}$.

    It is easy to see the former.
    For the latter, it suffices to prove that
    $\prod^{V}{A}$ is forced to be cofinal in ${\left(\prod{A},<_{\dot{E}}\right)}$.
Assume $p \force \dot{h} \in \prod{A}$. 
For each $\gamma \in A$, define 
$$h^*(\gamma) = \sup \{\xi + 1 \mid \exists q \leq p ( q \force \dot{h}({\gamma}) ={\xi})\}.$$
Then
 $p \force \dot{h}({\gamma}) <{h^*}({\gamma})$ for every $\gamma \in A$.  
Since $\mathbb{Q}$ has the $\kappa$-c.c. and $\gamma \geq \kappa$ is regular, 
we have $h^* \in \prod A$ in $V$, as desired.
   \end{proof}

   \begin{proof}[Proof of Theorem~\ref{pcfinc}]
    We may assume that each $\kappa_i$ 
    is indestructibly supercompact in the sense of Laver~\cite{laver} and $2^{\kappa_{i}} = \kappa_i^{+}$. We refer the reader to \cite{apter} for more details. 

    Let $\mathbb{Q}$ be the full support product
     $\prod_{i < \omega}{\rm Add}(\kappa_i,\kappa_{i + 1})$, 
     where ${\rm Add}(\kappa_{i},\kappa_{i + 1})$ is the poset 
    adding $\kappa_{i + 1}$ many Cohen subsets of $\kappa_{i}$. 
    Standard arguments show that $\mathbb{Q}$ preserves cofinalities and 
    forces $2^{\kappa_n} = \kappa_{n + 1}$ for every $n < \omega$.
     We claim that $\mathbb{Q}$ forces
     $\pcf{[\kappa_{n}^{+},\kappa_{n+1}^{+}) \cap \reg}\supseteq 
     [\kappa_{n}^{+},\kappa_{n + 2}^{+}) \cap \reg$
     for every $n < \omega$. 
    
Let $G\subseteq\mathbb{Q}$ be generic.      
    Since 
    $\mathbb{Q} \simeq \prod_{i > n}\operatorname{Add}(\kappa_{i},\kappa_{i + 1})
  \times\textstyle{\prod_{i\le n}\operatorname{Add}(\kappa_{i},\kappa_{i+1})}$ in $V$, 
  we have
     $G\simeq G_n \times H_n$ in $V[G]$.
By Theorem~\ref{maintheorem}~(1),
$\pcf{\kappa_{n}^{+} \cap \reg} 
=\pcf{\kappa_{n} \cap \reg} \cup\{\kappa_{n}\}
=(2^{\kappa_{n}})^{+} \cap \reg = \kappa_{n}^{++} \cap \reg$
in $V$.
This remains true in $V[G_n]$ 
by the $\kappa_{n+1}$-closure of the corresponding poset. 
Now we work in $V[G_n]$. 
Note that $\kappa_{n+1}$ is supercompact and  $2^{\kappa_{n+1}} = \kappa_{n + 2}$. 
By Theorem~\ref{maintheorem}~(1) we have 
$\pcf{\kappa_{n+1}^{+} \cap \reg} 
=\pcf{\kappa_{n+1} \cap \reg} \cup\{\kappa_{n+1}\}
= (2^{\kappa_{n+1}})^{+} \cap \reg = \kappa_{n + 2}^{+} \cap \reg$.
Therefore
$\pcf{[\kappa_{n}^{+},\kappa_{n+1}^{+}) \cap \reg}
= [\kappa_{n}^{+},\kappa_{n + 2}^{+}) \cap \reg$. 
Note that 
$\left(\prod_{i \le n}\operatorname{Add}(\kappa_{i},\kappa_{i+1})\right)^{V}=
\prod_{i \le n}\operatorname{Add}(\kappa_{i},\kappa_{i+1})$ 
has the $\kappa_{n}^{+}$-c.c. 
By Lemma~\ref{ccpcf} we have
$\pcf{[\kappa_{n}^{+},\kappa_{n+1}^{+}) \cap \reg}
\supseteq \pcf{[\kappa_{n}^{+},\kappa_{n+1}^{+}) \cap \reg}^{V[G_n]}
= [\kappa_{n}^{+},\kappa_{n + 2}^{+}) \cap \reg$
in $V[G]=V[G_n][H_n]$, as desired.

Since $\mathbb{Q}$ is $\kappa_{0}$-directed closed in $V$, 
    $\kappa_0$ remains supercompact in $V[G]$.
So we can define Prikry forcing $\mathbb{P}$ over $\kappa_0$. 
By Theorem~\ref{maintheorem}~(2), $\mathbb{P}$ forces
$\pcf{\kappa_{0} \cap \reg}= (2^{\kappa_{0}} )^{+}\cap \reg
    = \kappa_{1}^{+} \cap \reg$.
    By Lemma~\ref{ccpcf}, $\mathbb{P}$ forces
$[\kappa_{n}^{+},\kappa_{n + 2}^{+}) \cap \reg\subseteq 
    \pcf{[\kappa_{n}^{+},\kappa_{n+1}^{+}) \cap \reg}\subseteq 
    [\kappa_{n}^{+},(2^{\kappa_{n+1}})^{+}) \cap \reg
    =[\kappa_{n}^{+},\kappa_{n + 2}^{+}) \cap \reg$ for every $n< \omega$. 
Let $H\subseteq \mathbb{P}$ be generic. In $V[G][H]$,
we have
   ${\operatorname{pcf}^{n+1}(\kappa_0 \cap \reg)} ={\kappa_{n+1}^{+} \cap \reg}$
   by induction on $n < \omega$.
\end{proof}

   \section{An Analogue for Magidor Forcing}
   Prikry forcing is known for a wealth of variations.
   In this section, we give an analogue of Theorem~\ref{maintheorem}~(2) for one of them. 
   Here we take up Magidor forcing from \cite{magidor1978changing}, but the argument works
   equally well for other variations, e.g. the diagonal Prikry forcing as defined in \cite{NU}.

   Magidor forcing uses a sequence of ultrafilters rather than a single ultrafilter, and makes a hypermeasurable cardinal into a singular cardinal of
   uncountable cofinality. 
   For normal ultrafilters $U ,U'$ over $\kappa$, $U \vartriangleleft U'$ iff $U\in M \simeq \text{Ult}(V,U')$. Let ${\langle U_{\alpha} \mid \alpha < \lambda\rangle}$ be a $\vartriangleleft$-increasing sequence with $\lambda <\kappa$. Note that there is a such sequence if $\kappa$ is supercompact. For any $\beta< \alpha < \lambda$, we fix a function $F_{\beta}^{\alpha} \in \mbox{}^{\kappa}V$ such that $[F_{\beta}^{\alpha}]_{U_{\alpha}} = U_{\beta}$. For each $\alpha< \lambda$, define
      \begin{align*}
    A_{\alpha} & = \{\delta < \kappa \mid \forall \beta < \alpha \forall \gamma < \beta(F_{\gamma}^{\alpha}(\delta) \vartriangleleft F_{\beta}^{\alpha}(\delta) 
    \text{ are normal ultrafilters over }\delta)\}.\\
    B_{\alpha} & = \{\delta \in A_{\alpha}\setminus(\lambda + 1) \mid \forall \beta < \alpha \forall \gamma < \beta([F^{\beta}_{\gamma}\upharpoonright \delta]_{F_{\beta}^{\alpha}(\delta)} = F_{\gamma}^{\alpha}(\delta))\}.
   \end{align*}
   Note that $B_{\alpha} \in U_{\alpha}$. 
   Magidor forcing $\mathbb{M}$ is the set of pairs $\langle a,X \rangle$ such that
    \begin{itemize}
     \item $a$ is an increasing function such that
	   \begin{itemize}
	    \item $\operatorname{dom}(a) \in [\lambda]^{<\omega}$ and $\forall \alpha \in \operatorname{dom}(a)(a(\alpha) \in B_{\alpha})$.
	   \end{itemize}
     \item $X$ is a function such that
     \begin{itemize}
      \item $\operatorname{dom}(X) = \lambda \setminus \operatorname{dom}(a)$ and $\forall \alpha \in \operatorname{dom}(X)(X(\alpha) \subseteq B_{\alpha}$),
	    \item For every $\alpha \in \operatorname{dom}(X)$, if $\operatorname{dom}(a) \setminus (\alpha + 1) = \emptyset$, $X(\alpha) \in U_{\alpha}$. Otherwise, $X(\alpha) \in F^{\beta}_{\alpha}(a(\rho))$ where $\beta = \min (\operatorname{dom}(a) \setminus (\alpha + 1))$.
     \end{itemize}
\end{itemize}

    $\mathbb{M}$ is ordered by ${\langle a,X\rangle } \leq {\langle b,Y\rangle }$ iff $b \subseteq a$, $\forall \alpha \in \operatorname{dom}(X)(X(\alpha) \subseteq Y(\alpha))$ and $\forall \alpha \in \operatorname{dom}(a) \setminus \operatorname{dom}(b)(a(\alpha) \in Y(\alpha))$.
    $\mathbb{M}$ has the $\kappa^{+}$-c.c. and size $2^{\kappa}$. Thus, $\mathbb{M}$ does not change the value of $2^{\theta}$ for any $\theta \geq \kappa$. $\mathbb{M}$ preserves all cardinals above $\kappa$ but changes the cofinality of $\kappa$ like Prikry forcing. Let $\dot{g}$ be an $\mathbb{M}$-name such that $\mathbb{M} \force \dot{g} = \bigcup\{a \mid \exists X\langle a,X \rangle \in \dot{G} \}$, where $\dot{G}$ is the canonical $\mathbb{M}$-name for a generic filter. $\dot{g}$ is forced to be an increasing sequence of length $\lambda$ which converges to $\kappa$. As in Prikry forcing, we also have
   \begin{itemize}
    \item $\langle a, X \rangle \force \dot{g}\upharpoonright \operatorname{dom}({a}) = {a} \land \forall \alpha \in \lambda \setminus \operatorname{dom}(a)(\dot{g}(\alpha) \in X(\alpha))$. 
   \end{itemize}
    For each $\beta < \lambda$, We let $\mathbb{M}_{\beta}= \{{\langle a,X \rangle}_{\beta}\mid {\langle a,X \rangle} \in \mathbb{M}\}$ and $\mathbb{M}^{\beta} = \{{\langle a,X \rangle}^{\beta} \mid {\langle a,X \rangle} \in \mathbb{M}\}$. Here, ${\langle a,X\rangle}_{\beta}$ and ${\langle a,X\rangle}^{\beta}$ are ${\langle a\upharpoonright (\beta + 1),X\upharpoonright (\beta + 1)\rangle}$ and ${\langle a\upharpoonright (\lambda \setminus (\beta + 1)),X\upharpoonright (\lambda \setminus (\beta + 1))\rangle}$ respectively. The orders on $\mathbb{M}_\beta$ and $\mathbb{M}^{\beta}$ are naturally defined by that on $\mathbb{M}$.
    $\mathbb{M}$ can be factored as follows.
    \begin{lem}
     For every ${\langle a,X\rangle} \in \mathbb{M}$ and $\beta \in \operatorname{dom}(a)$, 
     we have
     $$\mathbb{M} / {\langle a,X\rangle} \simeq \mathbb{M}_{\beta}/{\langle a,X\rangle}_{\beta} \times \mathbb{M}^{\beta}/{\langle a,X\rangle}^{\beta}.$$
    \end{lem}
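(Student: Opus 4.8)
The plan is to exhibit the natural splitting map as the claimed isomorphism. Set
$$\Phi : \mathbb{M}/\langle a,X\rangle \to \mathbb{M}_{\beta}/\langle a,X\rangle_{\beta} \times \mathbb{M}^{\beta}/\langle a,X\rangle^{\beta},$$
given by $\Phi(\langle c,Z\rangle) = (\langle c,Z\rangle_{\beta}, \langle c,Z\rangle^{\beta})$, with inverse the gluing map sending a pair $(\langle c_{0},Z_{0}\rangle,\langle c_{1},Z_{1}\rangle)$ to $\langle c_{0}\cup c_{1}, Z_{0}\cup Z_{1}\rangle$. The first thing I would record is that any $\langle c,Z\rangle \le \langle a,X\rangle$ satisfies $a\subseteq c$, so $\beta\in\operatorname{dom}(a)\subseteq\operatorname{dom}(c)$; thus $\beta$ lies in the stem of \emph{every} condition below $\langle a,X\rangle$, and the restrictions $c\upharpoonright(\beta+1)$ and $c\upharpoonright(\lambda\setminus(\beta+1))$ partition $\operatorname{dom}(c)$ cleanly at $\beta$ (and likewise $Z$, whose domain avoids $\beta$). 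This already makes $\Phi$ and the gluing map mutually inverse bijections at the level of underlying sets.

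The heart of the argument is checking that $\Phi$ and its inverse land in the right sets, i.e. that splitting a condition yields two legitimate conditions and that gluing two legitimate conditions yields one. The point to verify is that, for each $\alpha\in\operatorname{dom}(Z)$, the ultrafilter in which $Z(\alpha)$ is required to live is computed identically in $\mathbb{M}$ and in the relevant factor. That ultrafilter is determined by the least stem point above $\alpha$: it is $U_{\alpha}$ when there is none, and $F^{\beta'}_{\alpha}(c(\beta'))$ with $\beta'=\min(\operatorname{dom}(c)\setminus(\alpha+1))$ otherwise. The key observation is that, because $\beta\in\operatorname{dom}(c)$, for every $\alpha<\beta$ the least stem point above $\alpha$ is at most $\beta$, hence unaffected by the part of $c$ above $\beta$; dually, for $\alpha>\beta$ it is a stem point above $\beta$, unaffected by the part of $c$ at or below $\beta$. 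Consequently the membership requirement on $Z(\alpha)$ in $\mathbb{M}$ agrees with the one imposed in $\mathbb{M}_{\beta}$ (for $\alpha<\beta$) or in $\mathbb{M}^{\beta}$ (for $\alpha>\beta$), and the two factors constrain disjoint blocks of coordinates with no interaction across $\beta$.

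It then remains to verify that $\Phi$ preserves the order in both directions, namely that $\langle c,Z\rangle \le \langle c',Z'\rangle$ holds below $\langle a,X\rangle$ exactly when $\langle c,Z\rangle_{\beta}\le\langle c',Z'\rangle_{\beta}$ and $\langle c,Z\rangle^{\beta}\le\langle c',Z'\rangle^{\beta}$. This is routine once the above is in place: each of the three clauses defining the order of $\mathbb{M}$ — the stem extension $c'\subseteq c$, the inclusions $Z(\alpha)\subseteq Z'(\alpha)$ for $\alpha\in\operatorname{dom}(Z)$, and $c(\alpha)\in Z'(\alpha)$ for newly added stem points $\alpha\in\operatorname{dom}(c)\setminus\operatorname{dom}(c')$ — is a coordinatewise statement that splits into its part on $\beta+1$ and its part on $\lambda\setminus(\beta+1)$, again because $\beta$ belongs to every stem and so no clause relates a coordinate below $\beta$ to one above it. I expect the only genuine obstacle to be the measure-assignment bookkeeping of the middle paragraph; once it is observed that $\beta\in\operatorname{dom}(a)$ insulates the two blocks of coordinates from one another, the remaining verifications are purely mechanical.
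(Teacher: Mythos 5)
The paper states this lemma with no proof at all (its proofs for the surrounding lemmas are deferred to Magidor's paper), so the only benchmark is the standard split-and-glue argument, and your map $\Phi$ is indeed that argument. However, as written your proof has a genuine gap, located exactly at the step you dismiss as automatic. When you glue a pair to $\langle c_{0}\cup c_{1}, Z_{0}\cup Z_{1}\rangle$, being a condition of $\mathbb{M}$ requires the stem $c_{0}\cup c_{1}$ to be an \emph{increasing} function, i.e.\ every value of $c_{1}$ must exceed $c_{0}(\beta)=a(\beta)$. (For the lower half there is no issue: $c_{0}$ is increasing with top coordinate $\beta$, so all its values are at most $a(\beta)$.) You verify only the ultrafilter bookkeeping and call the rest mechanical, but this junction monotonicity is not mechanical, and it cannot be derived from the paper's definition of $\mathbb{M}$. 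Inside $\mathbb{M}$, any $\langle c,Z\rangle\le\langle a,X\rangle$ has an increasing stem containing $a$, which forces $c(\alpha)>a(\beta)$ for all stem points $\alpha>\beta$; but in the factor $\mathbb{M}^{\beta}/\langle a,X\rangle^{\beta}$ the increasing-ness requirement sees only the coordinates above $\beta$ and imposes no bound involving $a(\beta)$. Since the paper requires of a condition only that $X(\alpha)\subseteq B_{\alpha}$ (so $\min X(\alpha)>\lambda$) together with membership in the appropriate ultrafilter, the set $X(\alpha)$ for $\alpha>\beta$ may contain ordinals below $a(\beta)$; in that case the upper factor contains a condition $\langle c_{1},Z_{1}\rangle$ with some stem value $c_{1}(\alpha)<a(\beta)$. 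The pair $(\langle a,X\rangle_{\beta},\langle c_{1},Z_{1}\rangle)$ then glues to a non-condition, and no element of the range of $\Phi$ extends this pair, because any $\langle c,Z\rangle\le\langle a,X\rangle$ with $\alpha\in\operatorname{dom}(c)$ satisfies $c(\alpha)>a(\beta)\ne c_{1}(\alpha)$, while stems of extensions must agree on common coordinates. So under the paper's literal definition $\Phi$ is injective and order-preserving but not onto, and its range is not even dense in the product: your claim that ``gluing two legitimate conditions yields one'' is false, not merely unproven.

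The missing ingredient is the clause, present in Magidor's original definition but omitted in the paper's rendering, that the measure-one sets of a condition lie above the preceding stem values: for every $\alpha\in\operatorname{dom}(X)$ and every $\beta''\in\operatorname{dom}(a)\cap\alpha$, each element of $X(\alpha)$ exceeds $a(\beta'')$. Conditions with this property are dense below any $\langle a,X\rangle$ with the same stem, since removing from $X(\alpha)$ a subset bounded below $\kappa$ (respectively below $a(\beta')$, where $\beta'$ is the least stem point above $\alpha$) keeps it in the relevant normal ultrafilter $U_{\alpha}$ (respectively $F^{\beta'}_{\alpha}(a(\beta'))$). Below such a condition the junction check becomes immediate: every stem value at a coordinate $\alpha>\beta$ of any condition in $\mathbb{M}^{\beta}/\langle a,X\rangle^{\beta}$ lies in $X(\alpha)\subseteq(a(\beta),\kappa)$, so the glued stem is increasing, and the rest of your verification (the locality of the ultrafilter assignments and the coordinatewise splitting of the three order clauses) goes through verbatim. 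You should therefore either record this amendment to the definition of $\mathbb{M}$ or prove the lemma after first passing to the dense set of such conditions; either form suffices for the way the factorization is used in the proof of Theorem~\ref{magidormaintheorem}.
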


   Note that $\mathbb{M}_{\beta} / \langle a,X\rangle_{\beta}$ has the $a(\beta)^{+}$-c.c. 
    Lemmas~\ref{magidordiagonallemma} and \ref{magidorprikrycondition} are analogues of Lemmas~\ref{diagonallemma} and \ref{prikrycondition} for Magidor forcing respectively.
    See \cite{magidor1978changing} for proofs. 
    \begin{lem}\label{magidordiagonallemma}
     Suppose that ${\langle a,X\rangle} \in \mathbb{M}$ and $\{{\langle b,X_{b}\rangle} \mid b \in {\rm LP}\}$ is a set of extensions of $\langle a,X\rangle$ where ${\rm LP} = \{b \mid \exists Y(\langle{b,Y}\rangle \leq \langle{a,X}\rangle )\}$. Then there is a $Z$ such that $\langle a,Z \rangle \in \mathbb{M}$ and every extension of $\langle{b,Y}\rangle$ is compatible with $\langle b,X_{b}\rangle$ if $\langle b,Y \rangle \leq \langle a,Z \rangle$.
    \end{lem}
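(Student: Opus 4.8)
The plan is to mimic the proof of Lemma~\ref{diagonallemma}, replacing the single diagonal intersection $\triangle_{b}X_{b}$ by a coordinatewise diagonal intersection adapted to the tree of lower parts of $\langle a,X\rangle$. In the Prikry case the decisive feature of $\triangle_{b}X_{b}$ was that a point $\xi$ lying in it automatically belongs to $X_{b}$ for every stem $b$ below $\xi$. The goal here is to build a single large-set function $Z$, with $\langle a,Z\rangle\leq\langle a,X\rangle$, enjoying the analogous feature: whenever a stem is extended inside $\langle a,Z\rangle$ by adjoining a point $\delta$ at some coordinate $\alpha$, that point already lies in $X_{b}(\alpha)$ for the lower part $b$ to which it is adjoined.

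First I would fix, for each coordinate $\alpha\in\lambda\setminus\operatorname{dom}(a)$, the measure $\mu^{a}_{\alpha}$ governing that coordinate (namely $U_{\alpha}$ if $\alpha$ exceeds $\operatorname{dom}(a)$, and $F^{\beta}_{\alpha}(a(\beta))$ with $\beta=\min(\operatorname{dom}(a)\setminus(\alpha+1))$ otherwise), and define $Z(\alpha)$ as a diagonal intersection of the sets $X_{b}(\alpha)$ taken in $\mu^{a}_{\alpha}$, indexed over lower parts $b$ so that a point enters $Z(\alpha)$ only when it lies in $X_{b}(\alpha)$ for every lower part $b$ below it. Since each $U_{\alpha}$, and each $F^{\beta}_{\alpha}(\delta)$ for $\delta\in B_{\alpha}$, is a normal ultrafilter on a cardinal above $\lambda$, such a diagonal intersection stays in the ultrafilter; hence $Z(\alpha)\in\mu^{a}_{\alpha}$ for every $\alpha$, so $\langle a,Z\rangle\in\mathbb{M}$ and $\langle a,Z\rangle\leq\langle a,X\rangle$.

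Then, given $\langle b,Y\rangle\leq\langle a,Z\rangle$ and an arbitrary extension $\langle c,W\rangle\leq\langle b,Y\rangle$, I would produce a common extension of $\langle c,W\rangle$ and $\langle b,X_{b}\rangle$ exactly as in Lemma~\ref{diagonallemma}. Each new stem point $c(\alpha)$ with $\alpha\in\operatorname{dom}(c)\setminus\operatorname{dom}(b)$ satisfies $c(\alpha)\in Y(\alpha)\subseteq Z(\alpha)$, and the diagonal property is designed to force $c(\alpha)\in X_{b}(\alpha)$; the common extension is then $\langle c,W^{*}\rangle$ with $W^{*}(\alpha)=W(\alpha)\cap X_{b}(\alpha)$, mirroring the passage to $\langle c,Y\cap X_{a}\rangle$ in the Prikry proof.

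The main obstacle, and the place where the conditions defining $B_{\alpha}$ do real work, is that in $\mathbb{M}$ the measure governing coordinate $\alpha$ is not fixed but depends on the least stem point chosen above $\alpha$: inserting a new coordinate between $\alpha$ and $\operatorname{dom}(b)$ replaces $\mu^{b}_{\alpha}$ by a different measure, and a stem may grow anywhere, not only at the top. Consequently the sets $Z(\alpha)$, which live in $\mu^{a}_{\alpha}$, must yield the membership statements $c(\alpha)\in X_{b}(\alpha)$ and $X_{b}(\alpha)\in\mu^{c}_{\alpha}$ needed for the common extension, even though $X_{b}(\alpha)$ lives in the altered measure $\mu^{b}_{\alpha}$. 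Reconciling these is precisely what the coherence $[F^{\beta}_{\gamma}\upharpoonright\delta]_{F^{\alpha}_{\beta}(\delta)}=F^{\alpha}_{\gamma}(\delta)$ for $\delta\in B_{\alpha}$ supplies: it guarantees that the measure-one sets at coordinate $\alpha$ project correctly under a change of the stem above $\alpha$, so that the relevant memberships survive. Once this bookkeeping across measure changes is carried out, the verification is routine and parallels Lemma~\ref{diagonallemma}.
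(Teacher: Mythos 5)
First, a point of comparison: the paper does not prove this lemma at all --- it states it as the Magidor-forcing analogue of Lemma~\ref{diagonallemma} and refers to \cite{magidor1978changing} for the proof --- so your attempt must be measured against Magidor's argument. Your sketch correctly locates the danger (the measure governing a coordinate changes when the stem grows above it, and stems can grow anywhere), but the construction you actually give does not meet that danger, and your closing claim that the remaining ``bookkeeping across measure changes'' is routine is precisely where the proof breaks down.

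The gap is this: you define $Z(\alpha)$ separately at each coordinate, as a diagonal intersection in $\mu^{a}_{\alpha}$ of the sets $X_{b}(\alpha)$ over lower parts $b$ lying \emph{below} the point being tested. But the lower parts that matter are typically \emph{not} below that point. Take $b=a\cup\{(\beta'',\delta'')\}$ with $\beta''>\alpha$ and $\delta''\in Z(\beta'')$: if $\langle b,Y\rangle\leq\langle a,Z\rangle$, a further extension may adjoin any suitable $\delta\in Y(\alpha)\subseteq Z(\alpha)\cap\delta''$ at coordinate $\alpha$, so the conclusion of the lemma requires every such addable $\delta$ to lie in $X_{b}(\alpha)$. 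Here $X_{b}(\alpha)$ depends on the upper point $\delta''$, is measure one only in $F^{\beta''}_{\alpha}(\delta'')$ rather than in $\mu^{a}_{\alpha}$, and never enters your construction, since no such $b$ is ``below'' any $\delta<\delta''$. A trivial family already defeats the proposal: let $X_{b}=X$ for all $b$ that agree with $a$ above their largest new point, but $X_{a\cup\{(\beta'',\delta'')\}}(\alpha)=(X(\alpha)\setminus\{\delta_{0}\})\cap\delta''$ for a fixed suitable $\delta_{0}$. Your $Z(\alpha)$ is then just $X(\alpha)$, which still contains $\delta_{0}$, and adjoining $\delta_{0}$ to $b$ at $\alpha$ produces an extension of $\langle b,Y\rangle$ incompatible with $\langle b,X_{b}\rangle$. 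Since for fixed $\delta$ there are $\kappa$ many possible upper points $\delta''$, no diagonal intersection performed at coordinate $\alpha$ alone can repair this.

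What the actual proof does, and what is missing from yours, is a \emph{coupled, cross-coordinate} construction: one first shrinks $Z(\alpha)$ into the average set $T=\left[\delta''\mapsto X_{a\cup\{(\beta'',\delta'')\}}(\alpha)\right]_{U_{\beta''}}$, which lies in $U_{\alpha}=[F^{\beta''}_{\alpha}]_{U_{\beta''}}$ by {\L}o\'{s}'s theorem, and only afterwards defines $Z(\beta'')$ to consist of those $\delta''$ whose sections cohere with the already-chosen lower sets, i.e.\ $\{\delta''\mid Z(\alpha)\cap\delta''\subseteq X_{a\cup\{(\beta'',\delta'')\}}(\alpha)\}$; this set is $U_{\beta''}$-large exactly \emph{because} $Z(\alpha)\subseteq T$, and not otherwise. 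This must then be iterated over all finite patterns of stem points sitting above a coordinate (an induction on their number, in which the coherence condition defining $B_{\alpha}$ handles iterated measure changes), and the same coupled argument is what secures your second requirement, that $X_{b}(\gamma)$ remain measure one in the new measures $F^{\alpha'}_{\gamma}(\delta)$ attached to newly adjoined points. Invoking the coherence condition, as you do, identifies the right tool but does not perform this recursion; the recursion \emph{is} the lemma.
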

    \begin{lem}[Prikry lemma]\label{magidorprikrycondition}
     For every $\langle a,X\rangle \in \mathbb{M}$ and statement $\sigma$ of the forcing language, $\beta \in \operatorname{dom}(a)$, there is a $Z$ such that
     \begin{itemize}
      \item $\langle a,Z \rangle \leq \langle a,X \rangle$ and $\langle a,Z \rangle_{\beta} = \langle a,X \rangle_{\beta}$.
      \item If $\langle b,Y \rangle \leq \langle a,Z \rangle$ decides $\sigma$, 
      then $\langle b, Y\rangle_{\beta}^\frown \langle a,Z \rangle^{\beta}$ decides $\sigma$.
     \end{itemize}
    \end{lem}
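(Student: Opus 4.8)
The plan is to imitate the proof of Lemma~\ref{prikrycondition}, with the single measure $U$ replaced by the coherent sequence $\langle U_{\alpha}\mid \beta<\alpha<\lambda\rangle$ governing the coordinates above $\beta$, and with Lemma~\ref{diagonallemma} replaced by its Magidor analogue Lemma~\ref{magidordiagonallemma}. Since $\beta\in\operatorname{dom}(a)$, the factorization lemma applies and gives $\mathbb{M}/\langle a,X\rangle\simeq \mathbb{M}_{\beta}/\langle a,X\rangle_{\beta}\times\mathbb{M}^{\beta}/\langle a,X\rangle^{\beta}$; in these terms the assertion is precisely that the upper factor $\mathbb{M}^{\beta}$ has the Prikry property over the lower factor $\mathbb{M}_{\beta}$. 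Concretely I would shrink only the upper constraint, so that $\langle a,Z\rangle_{\beta}=\langle a,X\rangle_{\beta}$, and aim to arrange that the truth value of $\sigma$ is already settled by the lower part of any deciding condition once its upper part is reset to the free level $\langle a,Z\rangle^{\beta}$. Since $\langle b,Y\rangle\le\langle b,Y\rangle_{\beta}^{\frown}\langle a,Z\rangle^{\beta}$, the conclusion is equivalent to the statement that this weaker condition already decides $\sigma$, necessarily in the same direction as $\langle b,Y\rangle$.

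For the combinatorial core I would argue exactly as in Lemma~\ref{prikrycondition}. Call a finite increasing function $s\supseteq a$ an \emph{upper stem} if $\operatorname{dom}(s)\setminus\operatorname{dom}(a)$ is a finite subset of $(\beta,\lambda)$ with each new value lying in the appropriate $B_{\alpha}$. Given an upper stem $s$ and a coordinate $\alpha$ at which one may add a further point, I would split the large set of candidate values $\delta$ (large in $U_{\alpha}$, or in the relevant projected measure when $\alpha$ falls below the top of $s$) into three classes: those for which some upper extension of $s\cup\{(\alpha,\delta)\}$ forces $\sigma$, those for which some upper extension forces $\lnot\sigma$, and the remainder. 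Exactly one class is large, and I would record it together with a witnessing constraint $Y_{s}$, just as $X_{b}^{+},X_{b}^{-},X_{b}^{0}$ and $Y_{b}$ in Lemma~\ref{prikrycondition}. Feeding the family $\{Y_{s}\}$ into Lemma~\ref{magidordiagonallemma} yields a single upper constraint $Z$ forcing every newly chosen value of an extension of $\langle a,Z\rangle^{\beta}$ to lie in the recorded large sets. A downward induction on the number of points of the upper stem---the analogue of the ``repeat the argument'' step in Lemma~\ref{prikrycondition}---then rules out the undecided class and shows that any decision survives the removal of the topmost upper point, so that the decision descends to $\langle a,Z\rangle^{\beta}$.

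The step needing genuine care is that $\sigma$ may depend on the lower generic, so that the partition above, and hence the recorded large set, in principle varies with the lower part of the condition; the shrinking $Z$ must nevertheless be chosen \emph{uniformly} over the lower factor. This is where the remark that $\mathbb{M}_{\beta}/\langle a,X\rangle_{\beta}$ has the $a(\beta)^{+}$-c.c. enters. Keeping everything in $V$, I would fix a maximal antichain $\mathcal{A}$ of the lower factor; since $a(\beta)^{+}\le\kappa$ we have $|\mathcal{A}|<\kappa$, while each $U_{\alpha}$ is $\kappa$-complete. Running the partition of the previous paragraph below each $q\in\mathcal{A}$ and intersecting the resulting large sets over $q\in\mathcal{A}$ stays large by $\kappa$-completeness, producing deciding classes independent of the lower part; the Magidor diagonal intersection over all upper stems then delivers a single $Z\in V$. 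I expect this amalgamation across the lower factor---together with keeping the whole construction coherent with the $\vartriangleleft$-ordering of the $U_{\alpha}$ as encoded by the projections $F_{\gamma}^{\alpha}$ when new upper coordinates are inserted below existing ones---to be the main obstacle, and it is exactly this coherence that Lemma~\ref{magidordiagonallemma} is designed to absorb.
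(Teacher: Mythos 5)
First, a remark on the comparison itself: the paper contains no proof of this lemma to measure your attempt against. Lemmas~\ref{magidordiagonallemma} and~\ref{magidorprikrycondition} are stated with only the pointer ``See \cite{magidor1978changing} for proofs.'' So your proposal can only be compared with Magidor's original argument and with the paper's proof of Lemma~\ref{prikrycondition}, which you imitate. At the level of architecture your plan is the right one and matches the standard proof: factor via $\mathbb{M}/\langle a,X\rangle\simeq\mathbb{M}_{\beta}/\langle a,X\rangle_{\beta}\times\mathbb{M}^{\beta}/\langle a,X\rangle^{\beta}$, run a three-way splitting over upper stems, absorb the diagonal bookkeeping into Lemma~\ref{magidordiagonallemma}, induct downward on the size of the upper stem, and make the construction uniform over the lower factor.

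That said, three soft spots keep this a plan rather than a proof. (a) Your uniformization is justified by ``each $U_{\alpha}$ is $\kappa$-complete,'' but for coordinates $\alpha>\beta$ with $\operatorname{dom}(a)\setminus(\alpha+1)\neq\emptyset$, and for coordinates lying below a newly inserted stem value $\delta$, the constraining measures are not the $U_{\alpha}$'s at all: they are the projections $F^{\beta'}_{\alpha}(a(\beta'))$ (where $\beta'=\min(\operatorname{dom}(a)\setminus(\alpha+1))$) and $F^{\alpha_{0}}_{\gamma}(\delta)$, which are normal ultrafilters over $a(\beta')$ and $\delta$ respectively, hence only $a(\beta')$- and $\delta$-complete. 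The amalgamation still goes through because every such cardinal is measurable, hence inaccessible, and exceeds $a(\beta)$; indeed $|\mathbb{M}_{\beta}/\langle a,X\rangle_{\beta}|\le 2^{a(\beta)}<a(\beta')$, so you may simply intersect over \emph{all} lower conditions and dispense with the chain condition altogether --- but this is the argument that must actually be made. (b) The antichain version as written does not close the proof: the lower part of a deciding condition $\langle b,Y\rangle\le\langle a,Z\rangle$ is merely \emph{compatible} with some $q\in\mathcal{A}$, not below it, so transferring the decision to $\langle b,Y\rangle_{\beta}{}^{\frown}\langle a,Z\rangle^{\beta}$ requires a further density argument in the lower factor (or, again, indexing over all lower conditions instead of an antichain). (c) Most importantly, the step you yourself flag as ``the main obstacle'' --- the splitting and downward induction when inserting a point $\delta$ at a coordinate $\alpha_{0}$ re-routes every coordinate in $(\beta,\alpha_{0})$ through measures depending on $\delta$, held coherent by $[F^{\beta}_{\gamma}\upharpoonright\delta]_{F^{\alpha}_{\beta}(\delta)}=F^{\alpha}_{\gamma}(\delta)$ --- is not a detail absorbed by Lemma~\ref{magidordiagonallemma}; it is the bulk of Magidor's actual proof, and your outline defers exactly the part carrying the mathematical weight.
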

    Here is the fundamental theorem of Magidor forcing:
    
    \begin{thm}[Magidor]
    The following hold:
     \begin{enumerate}
      \item[$(1)$] $\mathbb{M}$ adds no new subsets of $\lambda$. 
      In particular, $\lambda^{+}\cap \reg$ remains the same by $\mathbb{M}$.
      \item[$(2)$] $\mathbb{M}$ preserves all cardinals.
      \item[$(3)$] $\mathbb{M}$ forces that ${\kappa}$ is a strong limit singular cardinal of cofinality ${\lambda}$.
     \end{enumerate}
    \end{thm}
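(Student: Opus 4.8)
The plan is to treat the three clauses together, since the analytic core is a single distributivity argument that I will reuse. Much is already in hand: the $\kappa^{+}$-c.c. preserves every cardinal $\geq\kappa^{+}$ and fixes $2^{\theta}$ for $\theta\geq\kappa$, and $\dot g$ is forced to be increasing of length $\lambda$ and cofinal in $\kappa$. Thus, once I show below that $\lambda$ (a regular cardinal $<\kappa$) is preserved, I get $\cf(\kappa)=\lambda$ and hence that $\kappa$ is singular in the extension, which is the cofinality assertion of $(3)$. The genuinely new points are then: no new subsets of $\lambda$ appear (clause $(1)$); cardinals in the interval $(\lambda,\kappa]$ survive (the remaining content of clause $(2)$); and $\kappa$ stays strong limit (the rest of $(3)$).

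The engine for $(1)$ is that $\mathbb{M}$ is $\lambda^{+}$-distributive. First I would check that the direct extension order $\leq^{*}$ (keep the stem, shrink the measure-one sets coordinatewise) is $\lambda^{+}$-closed: every ultrafilter that can occur, namely each $U_{\alpha}$ on $\kappa$ and each local normal measure on a point of $B_{\alpha}\subseteq\kappa\setminus(\lambda+1)$, is $\lambda^{+}$-complete, so any $\leq^{*}$-descending sequence of length $\leq\lambda$ has a lower bound obtained by intersecting, coordinate by coordinate, at most $\lambda$ measure-one sets. Given a name $\dot A$ for a subset of $\lambda$ and a condition $\langle a,X\rangle$, I would process the ordinals $\xi<\lambda$ one at a time: at stage $\xi$ the Prikry lemma (Lemma~\ref{magidorprikrycondition}) produces a direct extension whose decision of ``$\check\xi\in\dot A$'' factors through a small lower part $\mathbb{M}_{\beta}$, and the diagonal lemma (Lemma~\ref{magidordiagonallemma}) lets me thread these direct extensions through a single master condition. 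Since the lower parts have a bounded chain condition and cannot themselves add subsets of $\lambda$, the master condition forces $\dot A$ to equal a set in $V$. This shows $P(\lambda)^{V[G]}=P(\lambda)^{V}$, whence every regular $\mu\leq\lambda$ stays regular and $\lambda^{+}\cap\reg$ is unchanged.

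For the interval $(\lambda,\kappa]$ in $(2)$ I would use the factorization $\mathbb{M}/\langle a,X\rangle\simeq\mathbb{M}_{\beta}/\langle a,X\rangle_{\beta}\times\mathbb{M}^{\beta}/\langle a,X\rangle^{\beta}$. Fix a cardinal $\mu$ with $\lambda<\mu<\kappa$. If some condition can place a stem value below $\mu$, extend to $\langle a,X\rangle$ with $\beta\in\operatorname{dom}(a)$, $a(\beta)<\mu$, and every measure-one set above $\beta$ shrunk into $[\mu,\kappa)$ (a tail, hence measure one). Then $\mathbb{M}_{\beta}$ has the $a(\beta)^{+}$-c.c., hence the $\mu$-c.c., while $\mathbb{M}^{\beta}$ only ever uses $\mu$-complete measures and so is $\mu$-distributive by the argument of the previous paragraph; forcing the distributive factor first and the $\mu$-c.c. factor second preserves $\mu$. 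If instead every condition forces the generic above $\mu$, the same closure argument shows $\mathbb{M}$ itself is $\mu$-distributive, again preserving $\mu$. Finally $\kappa$, being a limit cardinal in $V$ all of whose predecessors are now preserved, remains a cardinal. For the strong-limit part of $(3)$: for $\delta<\kappa$ the distributive factor adds no subset of $\delta$, and the complementary factor has size $<\kappa$, so $2^{\delta}<\kappa$ is kept because $\kappa$ is inaccessible in $V$; together with the cofinal $\dot g$ this completes $(3)$.

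The main obstacle I anticipate is the bookkeeping in the distributivity argument of the second paragraph. Lemma~\ref{magidorprikrycondition} does not outright decide ``$\check\xi\in\dot A$'' by a direct extension; it only reduces the decision to the small lower part $\mathbb{M}_{\beta}$, so the fusion over the $\lambda$ coordinates must be arranged so that the finitely many possible stem extensions and all of these lower-part decisions are captured simultaneously by one master condition obtained from the diagonal lemma. Verifying that the local measures sitting on the generic points are genuinely $\lambda^{+}$- (respectively $\mu$-) complete, and that the product-order bookkeeping in $(2)$ preserves the relevant chain condition and distributivity, is where I expect the care to be needed; the remaining verifications are routine.
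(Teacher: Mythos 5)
First, note that the paper itself gives no proof of this theorem: it is quoted as Magidor's result and the reader is referred to \cite{magidor1978changing}, so your attempt can only be measured against the standard argument. Your architecture (factor $\mathbb{M}$ into a lower part with a chain condition and an upper part whose direct extension order is highly closed, reduce decisions to the lower part via Lemma~\ref{magidorprikrycondition}, fuse with Lemma~\ref{magidordiagonallemma}) is indeed the right skeleton, and your observation that every measure that can occur is $\lambda^{+}$-complete is correct. But there is a genuine gap at the step that carries all the weight: you dispose of the lower parts with the phrase ``the lower parts have a bounded chain condition and cannot themselves add subsets of $\lambda$.'' A chain condition gives no such thing --- Cohen forcing is c.c.c.\ and adds subsets of $\omega$ --- and, more to the point, the lower part $\mathbb{M}_{\beta}$ below a condition is itself a Magidor forcing on $a(\beta)$ with the measure sequence $\langle F^{\beta}_{\alpha}(a(\beta))\mid \alpha<\beta\rangle$, so the assertion that it adds no new subsets of $\lambda$ is an instance of clause $(1)$, the very statement being proved. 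What is missing is the induction (on the length of the measure sequence, equivalently on the cardinal carrying it) that is the backbone of Magidor's proof: one proves, simultaneously for all shorter sequences, that a Magidor forcing all of whose points lie above $\nu$ adds no new functions from $\nu$ into $\on$ and preserves cardinals; the Prikry-lemma fusion then shows that every function $\nu\to\on$ with $\nu<\kappa$ in the full extension already lies in the extension by some lower part $\mathbb{M}_{\beta}$, and the induction hypothesis finishes. Without this induction your argument for $(1)$ is circular.

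The same gap propagates into your treatment of $(2)$ and $(3)$, and there is a second, independent problem there: you conclude preservation of $\mu$ from ``$\mathbb{M}_{\beta}$ is $\mu$-c.c.\ and $\mathbb{M}^{\beta}$ is $\mu$-distributive; force the distributive factor first and the c.c.\ factor second.'' That is not a valid preservation scheme: Easton's lemma needs one factor to be $\mu$-\emph{closed} (closure is absolute to the extension by the other factor, distributivity is not), and a $\mu$-c.c.\ poset of the ground model need not remain $\mu$-c.c.\ after a merely $\mu$-distributive forcing, so neither order of forcing the product is justified as written. In the correct argument this step is bypassed: the reduction above places any potential collapsing function $f:\nu\to\mu$ into $V[G_{\beta}]$ for some $\beta<\lambda$, and cardinal preservation for the shorter Magidor forcing $\mathbb{M}_{\beta}$ is quoted from the induction hypothesis (together with its $a(\beta)^{+}$-c.c.\ above $a(\beta)$). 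The strong limit clause of $(3)$ is obtained the same way: every subset of $\delta<\kappa$ lies in some $V[G_{\beta}]$, and counting the sets $P(\delta)^{V[G_{\beta}]}$ in $V$ over the $\lambda$ many values of $\beta$, using inaccessibility of $\kappa$ in $V$, gives $2^{\delta}<\kappa$ in the extension. So your proposal identifies the right tools, but both of its load-bearing steps are, as written, unjustified.
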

  Now we get an analogue of Theorem~\ref{maintheorem}~(2) for Magidor forcing.
  \begin{thm}\label{magidormaintheorem}
  $\mathbb{M}$ forces that $\pcf{{\kappa} \cap \reg} = (2^{{\kappa}})^{+} \cap \reg$.
  \end{thm}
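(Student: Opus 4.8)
The plan is to imitate the proof of Theorem~\ref{maintheorem}~(2), replacing Prikry forcing by $\mathbb{M}$ and Lemma~\ref{prikrycondition} by its Magidor analogue, Lemma~\ref{magidorprikrycondition}. Since $\mathbb{M}$ preserves cardinals and does not move $2^{\kappa}$, we have $\kappa\cap\reg\subseteq\pcf{\kappa\cap\reg}\subseteq(2^{\kappa})^{+}\cap\reg$ in the extension, so it suffices to force $\theta\in\pcf{\kappa\cap\reg}$ for each regular $\theta\in(\kappa,(2^{\kappa})^{+})$. Fix such a $\theta$. Exactly as before, fix a function $f_{\theta}\in{}^{\kappa}(\kappa\cap\reg)$ representing $\theta$, chosen so that $f_{\theta}(\xi)>\xi$ for every $\xi$ and $f_{\theta}$ is strictly increasing on the \emph{club} $X$ of its closure points. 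The feature that drives the argument is that $X$, being a club, lies in \emph{every} $U_{\gamma}$; hence any condition can be refined to force $\dot g(\gamma)\in X$ for all $\gamma$ outside its stem, so $\mathbb{M}\force\operatorname{ran}(\dot g)\subseteq^{*}X$. Consequently $f_{\theta}\upharpoonright\operatorname{ran}(\dot g)$ is strictly increasing and
\[
\mathbb{M}\force\Bigl(\textstyle\prod_{\gamma<\lambda}f_{\theta}(\dot g(\gamma)),<^{*}\Bigr)\simeq\Bigl(\textstyle\prod f_{\theta}``\operatorname{ran}(\dot g),<_{\dot F}\Bigr),
\]
where $<^{*}$ and $\dot F$ are the tail filters on $\lambda$ and on $f_{\theta}``\operatorname{ran}(\dot g)\subseteq\kappa\cap\reg$. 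As in the Prikry case it then suffices to exhibit $\langle f_{\alpha}:\alpha<\theta\rangle$ in $\prod^{V}f_{\theta}$ that $\mathbb{M}$ forces to be increasing and cofinal in $(\prod_{\gamma<\lambda}f_{\theta}(\dot g(\gamma)),<^{*})$; then $\theta=\cf(\prod f_{\theta}``\operatorname{ran}(\dot g),<_{\dot F})\in\pcf{f_{\theta}``\operatorname{ran}(\dot g)}\subseteq\pcf{\kappa\cap\reg}$.

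I would take the $f_{\alpha}$ to be increasing, continuous representatives, so that for $\alpha<\beta$ the set $\{\xi:f_{\alpha}(\xi)<f_{\beta}(\xi)\}$ contains a club and hence lies in every $U_{\gamma}$. The \emph{increasing} clause is then proved just as in Theorem~\ref{maintheorem}~(2): given $\langle a,X\rangle\in\mathbb{M}$, replace each $X(\gamma)$ by its intersection with $\{f_{\alpha}<f_{\beta}\}$. This is a legitimate refinement, since the latter set lies in $U_{\gamma}$ and, containing a club, also in each reflected measure $F^{\beta}_{\gamma}(a(\beta))$; and it forces $f_{\alpha}(\dot g(\gamma))<f_{\beta}(\dot g(\gamma))$ for every $\gamma\notin\operatorname{dom}(a)$. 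Thus $\mathbb{M}\force f_{\alpha}\upharpoonright\dot g<^{*}f_{\beta}\upharpoonright\dot g$.

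The \emph{cofinal} clause is the heart of the matter, and the place where the Magidor machinery must replace the single diagonal intersection of Lemma~\ref{prikrycondition}. Suppose $\force\dot h\in\prod_{\gamma<\lambda}f_{\theta}(\dot g(\gamma))$. For each stem $a$ and each $\gamma\in\operatorname{dom}(a)$ one has $\langle a,X\rangle\force\dot h(\gamma)<f_{\theta}(a(\gamma))$, and Lemma~\ref{magidorprikrycondition} (applied with the designated coordinate $\gamma$) yields a direct extension deciding the value, say $\dot h(\gamma)=\eta^{a}_{\gamma}<f_{\theta}(a(\gamma))$. For $\xi\in X$ put
\[
h(\xi)=\sup\bigl\{\eta^{a}_{\gamma}+1:\ a\ \text{a stem},\ \gamma\in\operatorname{dom}(a),\ a(\gamma)=\xi\bigr\}.
\]
The number of relevant stems is at most $|\xi|<f_{\theta}(\xi)$, and $f_{\theta}(\xi)$ is regular, so $h(\xi)<f_{\theta}(\xi)$ and $h\in\prod^{V}_{\xi\in X}f_{\theta}(\xi)$. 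Amalgamating the sets produced above by Lemma~\ref{magidordiagonallemma} in place of the diagonal intersection, one obtains a single condition forcing $\dot h(\gamma)<h(\dot g(\gamma))$ for all $\gamma$ outside its stem, so $\force\dot h<^{*}h\upharpoonright\dot g$. Since $h\in\prod^{V}f_{\theta}$ and $\langle f_{\alpha}:\alpha<\theta\rangle$ was chosen cofinal below $f_{\theta}$, there is $\alpha<\theta$ with $\mathbb{M}\force h\upharpoonright\dot g<^{*}f_{\alpha}\upharpoonright\dot g$. This yields cofinality and completes the construction of the scale, whence $\mathbb{M}\force\theta\in\pcf{\kappa\cap\reg}$.

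The step I expect to be genuinely delicate is this last, cofinal one. Unlike a Prikry condition, a condition of $\mathbb{M}$ carries an entire function $X$ of measure-one sets, and deciding $\dot h(\gamma)$ fixes only the $\gamma$-th coordinate while leaving the reflected measures below $\gamma$ in play; keeping the bound $h(\xi)$ strictly below the regular value $f_{\theta}(\xi)$, and replacing the single diagonal intersection by the iterated amalgamation supplied by Lemma~\ref{magidordiagonallemma}, is exactly where the extra bookkeeping peculiar to Magidor forcing has to be carried out.
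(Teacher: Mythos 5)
Your skeleton (pass to a coordinatewise product over $\dot g$, exhibit an increasing sequence of ground-model functions, prove cofinality by deciding $\dot h$ and amalgamating) is the right one, but the two steps you treat as routine are exactly where the Prikry argument fails to transfer, and both are gaps as written. First, the representatives. In the Magidor setting there is no single ultrafilter, so ``fix $f_{\theta}$ representing $\theta$'' is not meaningful: a representative is tied to one measure $U_{\gamma}$, and $[f]_{U_{\gamma}}=\theta$ for one $\gamma$ says nothing about $[f]_{U_{\delta}}$ for $\delta\neq\gamma$. Your substitute --- a $<_{\mathrm{club}}$-increasing family $\langle f_{\alpha}\mid\alpha<\theta\rangle$ ``chosen cofinal below $f_{\theta}$'' --- is precisely the crux of the theorem, asserted rather than proved. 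In the Prikry case cofinality is free: $[h]_{U}<[f_{\theta}]_{U}=\theta$ pins $h$ below $f_{\alpha}$ with $\alpha=[h]_{U}+1$. Here you would need $\langle [f_{\alpha}]_{U_{\gamma}}\mid\alpha<\theta\rangle$ to be cofinal in $[f_{\theta}]_{U_{\gamma}}$ for \emph{every} $\gamma<\lambda$ simultaneously, and club-domination delivers nothing of the sort; moreover you give no construction of a $<_{\mathrm{club}}$-increasing chain of length $\theta+1$ (with $\theta$ possibly $2^{\kappa}$), and none is routinely available, since at limits of cofinality $>\kappa$ an upper bound modulo the club filter need not exist. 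Your auxiliary claim that a set containing a club of $\kappa$ lies in each reflected measure $F^{\beta}_{\gamma}(a(\beta))$ is also false: $F^{\beta}_{\gamma}(a(\beta))$ is a measure on the cardinal $a(\beta)<\kappa$, and a club of $\kappa$ can even be disjoint from $a(\beta)$. (This last error is inessential, since $<^{*}$ only requires shrinking coordinates above $\max\operatorname{dom}(a)$, which is what the paper does.) The paper's solution is to give each coordinate its own representatives: $f^{\alpha}_{\gamma}$ with $[f^{\alpha}_{\gamma}]_{U_{\alpha}}=\gamma$ for all $\gamma\le\theta$ and $\alpha<\lambda$, the product $\prod_{\alpha<\lambda}f^{\alpha}_{\theta}(\dot g(\alpha))$, and the names $\dot f_{\gamma}(\alpha)=f^{\alpha}_{\gamma}(\dot g(\alpha))$; cofinality then comes from taking $\gamma=\sup_{\alpha<\lambda}[h^{\alpha}]_{U_{\alpha}}+1<\theta$, using regularity of $\theta>\lambda$.

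Second, the decision step. Lemma~\ref{magidorprikrycondition} does not yield a direct extension deciding $\dot h(\gamma)$; it only says that if some extension $\langle b,Y\rangle\le\langle a,Z\rangle$ decides, then its truncation $\langle b,Y\rangle_{\beta}{}^{\frown}\langle a,Z\rangle^{\beta}$ already decides. The decided value therefore still depends on the lower part, which ranges over the nontrivial forcing $\mathbb{M}_{\beta}$ (in Prikry forcing the lower part is a finite stem, which is why the Prikry lemma there gives outright decisions); so your ordinal $\eta^{a}_{\gamma}$ is not well defined. The paper's fix is to put $\eta_{b}=\sup\{\zeta+1\mid\exists p\in\mathbb{M}_{\beta_{b}}/\langle b,Y_{b}\rangle_{\beta_{b}}\,(p^{\frown}\langle b,Y_{b}\rangle^{\beta_{b}}\force\dot h(\beta_{b})=\zeta)\}$ and to invoke the $b(\beta_{b})^{+}$-c.c.\ of the factor $\mathbb{M}_{\beta_{b}}/\langle b,Y_{b}\rangle_{\beta_{b}}$ to keep $\eta_{b}$ below the regular cardinal $f^{\beta_{b}}_{\theta}(b(\beta_{b}))$. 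This chain-condition computation is exactly the ``bookkeeping'' your closing paragraph flags as delicate, but it never appears in your proof, and without it the bound $h(\xi)<f_{\theta}(\xi)$, and with it the whole cofinality argument, is unsupported.
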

  \begin{proof}
   By the proof of Theorem~\ref{maintheorem}, it suffices to show that $\mathbb{M} \force (\kappa,{(2^{\kappa})}^{+}) \cap \reg \subseteq \pcf{\kappa \cap \reg}$. Note that $(\kappa,(2^{\kappa})^{+}) \cap \reg$ remains the same after forcing with $\mathbb{M}$. Let $\theta \in (\kappa,(2^{\kappa})^{+})\cap \reg$. Let us see that $\mathbb{M} \force \theta \in \pcf{(\kappa,(2^{\kappa})^{+}) \cap \reg}$.

 For every $\gamma \leq \theta$ and $\alpha < \lambda$, we fix a function $f^{\alpha}_{\gamma} \in {^{\kappa}}\kappa$ such that $[f^{\alpha}_{\gamma}]_{U_{\alpha}} = \gamma$. We may assume $f_\theta^\alpha \in {^{\kappa}(\kappa \cap \reg)}$. Let $X' \in \prod_{\alpha < \lambda}U_\alpha$ be a function in $V$ such that $X'({\alpha}) = \{\xi \in B_\alpha \mid \forall \eta < \xi(f^{\alpha}_{\theta}(\eta) < \xi) \land \xi <f_{\theta}^{\alpha}(\xi)\}$ for any $\alpha < \lambda$.

We will show that $\mathbb{M}$ forces ${\left(\prod_{\alpha \in \lambda}{f}^{\alpha}_{\theta}(\dot{g}(\alpha)),<^{*}\right)}$ has an increasing and cofinal sequence of length $\theta$. Here, $<^{*}$ is an $\mathbb{M}$-name for the order on $\prod_{\alpha \in \lambda}{f}^{\alpha}_{\theta}(\dot{g}(\alpha))$ defined by the cobounded filter over $\lambda$. This gives the desired result, as shown by the following argument:
  
   By a usual density argument, we can find an $\mathbb{M}$-name $\dot{A}$ such that $\mathbb{M}$ forces the following properties:
\begin{itemize}
 \item $\dot{A} \in [\lambda]^{\lambda}$.
 \item $\forall \alpha,\beta \in \dot{A}(\alpha < \beta \to f_\theta^{\alpha}(\dot{g}(\alpha)) < f_\theta^{\beta}(\dot{g}(\beta)))$.
\end{itemize}
   And thus, by the proof of Theorem~\ref{maintheorem}, we have
   \begin{center}
    $\mathbb{M} \force {\left(\prod_{\alpha \in \dot{A}}{f}^{\alpha}_{\theta}(\dot{g}(\alpha)),<^{*} \upharpoonright \dot{A}\right)} \simeq \left(\prod \{{f}_{\theta}^{\alpha}(\dot{g}(\alpha)) \mid \alpha \in \dot{A}\}, <_{\dot{F}}\right)$.
   \end{center}
      Here, $\dot{F}$ is an $\mathbb{M}$-names for the cobounded filter over $\{{f}_{\theta}^{\alpha}(\dot{g}(\alpha)) \mid \alpha \in \dot{A}\}$.  It follows that $\mathbb{M}$ forces $\left(\prod \{{f}_{\theta}^{\alpha}(\dot{g}(\alpha) \mid \alpha \in \dot{A}\}, <_{\dot{F}}\right)$ has an increasing and cofinal sequence of length $\theta$. 

   For every $\gamma < \theta$, let $\dot{f}_{\gamma}$ be an $\mathbb{M}$-name for a function $\alpha \mapsto {f}^{\alpha}_{\gamma}(\dot{g}(\alpha))$. It suffices to prove 
   \begin{itemize}
    \item[(i)]
	      $\mathbb{M}\force{\langle\dot{f}_{\gamma}\mid \gamma < {\theta}\rangle}${ is increasing in }$\left(\prod_{\alpha < {\lambda}}{f}^{\alpha}_{\theta}(\dot{g}(\alpha)),<^{*}\right)$.
    \item[(ii)]
	       $\mathbb{M} \force \langle\dot{f}_\gamma \mid \gamma < \theta\rangle${ is cofinal in }$\left(\prod_{\alpha < {\lambda}}{f^{\alpha}_{\theta}}(\dot{g}(\alpha)),<^{*}\right)$.
   \end{itemize}

   (i) Let $\gamma < \delta < \theta$. Note that we have $Y(\alpha) = \{\xi < \kappa \mid f_{\gamma}^{\alpha}(\xi) < f_{\delta}^{\alpha}(\xi)\} \in U_\alpha$ for each $\alpha < \lambda$. Let $\langle a,X\rangle \in \mathbb{M}$ be arbitrary. Define $Z = (X \upharpoonright \beta_a)^{\frown}\langle X(\alpha) \cap Y(\alpha) \mid \alpha \geq \beta_a \rangle$. Here, $\beta_a = \max{\operatorname{dom}(a)}$. Then $\langle{a,Z}\rangle \leq \langle a,X\rangle$ forces $f_{\gamma}(\alpha) < f_{\delta}(\alpha)$ for every $\alpha > \beta_a$. 

 (ii) Let $\langle a,X \rangle \in \mathbb{M}$ and $\dot{h}$ be arbitrary. Suppose $\langle a,X \rangle \force \dot{h} \in \prod_{\alpha < {\lambda}}{f}^{\alpha}_{\theta}(\dot{g}(\alpha))$. By the proof of (i), we may assume that $X(\alpha) \subseteq X'(\alpha)$ for all $\alpha > \beta_b$. For each $b \in {\rm LP} = \{b \mid \exists Y(\langle b,Y\rangle \leq \langle a,X \rangle)\}$ define $Y_{b}$ and $\eta_b < \kappa$ as follows. If $\beta_b > \beta_a$, by Lemma~\ref{magidorprikrycondition} and $f_{\theta}^{\beta_b}(b(\beta_b)) < \kappa$, there is a $Y_b$ such that 
\begin{itemize}
 \item $\langle b,Y_b\rangle \leq \langle a,X\rangle$.
 \item if $\langle c,Z\rangle \leq \langle b,Y_{b}\rangle$ forces $\dot{h}(\beta_b) = {\zeta}$, then $\langle c,Z\rangle_{\beta_b}^{\frown} \langle b,Y_{b}\rangle^{\beta_b}\force \dot{h}({\beta_b}) = {\zeta}$.
\end{itemize}
   Define $\eta_{b}$ by
\begin{center}
 $\eta_{b} = \sup\{\zeta + 1< f^{\beta_b}_{\theta}(b(\beta_b))\mid \exists p \in \mathbb{M}_{\beta_b} / \langle b,Y_{b}\rangle_{\beta_b}(p^{\frown}\langle b,Y_{b}\rangle^{\beta_b} \force \dot{h}({\beta_b}) = {\zeta})\}$.
\end{center}
   Then,
\begin{center}
 $\langle b,Y_{b} \rangle \force \dot{h}({\beta_b}) < \eta_b$. 
\end{center}
   For $b \in \mathrm{LP}$ with $\beta_b \leq \beta_a$, $Y_b = X$ and $\eta_b = 0$.
 
   For each $b \in \mathrm{LP}$, since $\mathbb{M}_{\beta_b} / \langle b,Y_{b}\rangle_{\beta_b}$ has the $b(\beta_b)^{+}$-c.c., $\eta_{b} < f^{\beta_b}_{\theta}(b(\beta_b))$. For every $\alpha > \alpha_b$, define $h^{\alpha}(\xi) = \sup \{\eta_{b}< f^{\alpha}_{\theta}(\xi) \mid b \in {\rm LP} \land b(\beta_b) = \xi \land \beta_b = \alpha\}$. Because of $|\{b \in {\rm LP} \mid b(\beta_b)= \xi \land \beta_b = \alpha\}| = |\alpha| \cdot |\xi|$, we have $h^\alpha(\xi) < f_\theta^\alpha(\xi)$ for every $\xi \in X(\alpha)$. Let $\gamma = \sup_{\alpha > \beta_b}[h^{\alpha}]_{U_{\alpha}} + 1< \theta$. By Lemma~\ref{magidordiagonallemma} and the proof of (i), there is an extension $\langle a,Z \rangle \leq \langle a,X \rangle$ such that
\begin{itemize}
 \item every extension of $\langle b, Y \rangle$ is compatible with $\langle b,Y_{b}\rangle$ if $\langle b,Y \rangle \leq \langle a,Z \rangle$. 
 \item $\forall \alpha > \beta_a\forall \xi \in Z(\alpha)(h^{\alpha}(\xi) < f_{\gamma}^{\alpha}(\xi))$.
\end{itemize}

   Lastly, we claim that $\langle a,Z \rangle \force \dot{h}(\alpha) < \dot{f}_{\gamma}(\alpha)$ for all $\alpha > \beta_a$. 
   Let $\langle {b,Y}\rangle \leq \langle {a,Z} \rangle$ and $\alpha > \beta_a$ be arbitrary. Extending $\langle b,Y\rangle$ we may assume that $\alpha \in \operatorname{dom}(b) \setminus (\beta_a + 1)$. Now, we can find $\langle c,Y'\rangle$ such that $\langle b,Y\rangle \leq \langle c,Y'\rangle \leq \langle a,Z \rangle$ and $\beta_c = \alpha$. By the certain property of $\langle a,Z \rangle$, $\langle{c,Y_{c}}\rangle$ and $\langle b,Y\rangle$ have a common extension forcing $\dot{h}(\alpha) < \eta_c < h^{\alpha}(c(\alpha)) < f^{\alpha}_{\gamma}(c(\alpha)) = \dot{f}_{\gamma}(\alpha)$, as desired.
  \end{proof}
     Theorem~\ref{magidormaintheorem} enables us to generalize Theorem~\ref{pcfinc} 
     as follows, including the case of uncountable cofinality.
  \begin{thm}\label{pcfinc2}
   Suppose $\langle\kappa_{i} \mid i < \omega \rangle$ is an increasing sequence of supercompact cardinals greater than a regular cardinal $\lambda$. Then in some forcing extension the following hold:
   \begin{enumerate}
    \item[$(1)$] $\kappa_{0}$ is a singular cardinal of cofinality $\lambda$.
    \item[$(2)$] $\operatorname{pcf}^{n}(\kappa_{0} \cap \reg) \subsetneq \operatorname{pcf}^{n+1}(\kappa_{0} \cap \reg)$ for all $n< \omega$.
    \item[$(3)$] $\lambda^{+} \cap \reg = (\lambda^{+} \cap \reg)^{V}$.
   \end{enumerate}
  \end{thm}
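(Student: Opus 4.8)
The plan is to mimic the proof of Theorem~\ref{pcfinc} almost verbatim, substituting Magidor forcing for Prikry forcing and Theorem~\ref{magidormaintheorem} for Theorem~\ref{maintheorem}~(2), and then to establish the extra clause~(3). First I would appeal to \cite{apter} to arrange that each $\kappa_i$ is indestructibly supercompact in the sense of Laver with $2^{\kappa_i}=\kappa_i^+$, while $\lambda<\kappa_0$ stays regular. I then force with the full support product $\mathbb{Q}=\prod_{i<\omega}\operatorname{Add}(\kappa_i,\kappa_{i+1})$. Exactly as in the proof of Theorem~\ref{pcfinc}, $\mathbb{Q}$ preserves cofinalities, forces $2^{\kappa_n}=\kappa_{n+1}$, and forces $\pcf{[\kappa_n^+,\kappa_{n+1}^+)\cap\reg}=[\kappa_n^+,\kappa_{n+2}^+)\cap\reg$ for every $n<\omega$; since $\mathbb{Q}$ does not involve $\lambda$, this computation carries over unchanged and I would simply cite it. Because $\mathbb{Q}$ is $\kappa_0$-directed closed, $\kappa_0$ remains supercompact in $V[G]$, so $V[G]$ carries a $\vartriangleleft$-increasing sequence $\langle U_\alpha\mid\alpha<\lambda\rangle$ of normal ultrafilters over $\kappa_0$, and Magidor forcing $\mathbb{M}$ of length $\lambda$ is legitimately defined.

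Working in $V[G][H]$ for $H\subseteq\mathbb{M}$ generic, Theorem~\ref{magidormaintheorem} (applied in $V[G]$, where $2^{\kappa_0}=\kappa_1$) gives $\pcf{\kappa_0\cap\reg}=(2^{\kappa_0})^+\cap\reg=\kappa_1^+\cap\reg$. Since $\mathbb{M}$ has the $\kappa_0^+$-c.c., hence the $\kappa_n^+$-c.c. for every $n$, Lemma~\ref{ccpcf} yields $\operatorname{pcf}^{V[G]}([\kappa_n^+,\kappa_{n+1}^+)\cap\reg)\subseteq\pcf{[\kappa_n^+,\kappa_{n+1}^+)\cap\reg}$, and combining this lower bound with the upper bound $\pcf{A}\subseteq(2^{\sup A})^+\cap\reg$ and the fact that $\mathbb{M}$ fixes $2^{\kappa_{n+1}}=\kappa_{n+2}$ gives $\pcf{[\kappa_n^+,\kappa_{n+1}^+)\cap\reg}=[\kappa_n^+,\kappa_{n+2}^+)\cap\reg$ in $V[G][H]$ for every $n<\omega$.

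To deduce~(2) I would prove by induction that $\operatorname{pcf}^{n+1}(\kappa_0\cap\reg)=\kappa_{n+1}^+\cap\reg$, using the standard identity $\pcf{A\cup B}=\pcf{A}\cup\pcf{B}$. The base case is the computation of $\pcf{\kappa_0\cap\reg}$ above. For the step I write $\kappa_{n+1}^+\cap\reg=(\kappa_0\cap\reg)\cup\bigcup_{m\le n}([\kappa_m^+,\kappa_{m+1}^+)\cap\reg)$, apply the union identity together with the two pcf computations established above, and observe that the resulting intervals $[\kappa_m^+,\kappa_{m+2}^+)$ telescope to $\kappa_{n+2}^+\cap\reg$. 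The strict inclusions of~(2) then follow at once from $\kappa_n^+\cap\reg\subsetneq\kappa_{n+1}^+\cap\reg$, and clause~(1) is immediate from the fundamental theorem of Magidor forcing, which makes $\kappa_0$ a singular cardinal of cofinality $\lambda$.

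The genuinely new ingredient, and the point I expect to require the most care, is clause~(3). For it I would note that $\mathbb{Q}$, being $\kappa_0$-closed with $\lambda^+<\kappa_0$, adds no new subsets below $\kappa_0$ and hence leaves $\lambda^+\cap\reg$ unchanged, while $\mathbb{M}$ adds no new subsets of $\lambda$ and preserves all cardinals by the fundamental theorem of Magidor forcing, so that $\lambda^+$ stays regular and $\lambda^+\cap\reg$ is again unchanged; composing the two extensions gives $(\lambda^+\cap\reg)^{V[G][H]}=(\lambda^+\cap\reg)^V$. I expect the main obstacle to be organizational rather than mathematical: keeping the $\kappa$-c.c. hypothesis of Lemma~\ref{ccpcf} matched to the left endpoint $\min(A)$ of each interval, and confirming that $\mathbb{M}$ really can be defined over $V[G]$, i.e.\ that the required $\vartriangleleft$-increasing sequence of ultrafilters survives the preparatory forcing.
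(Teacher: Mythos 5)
Your proposal is correct and is exactly the proof the paper intends: the paper leaves Theorem~\ref{pcfinc2} as a direct generalization of the proof of Theorem~\ref{pcfinc}, replacing Prikry forcing with Magidor forcing over $\kappa_0$ (legitimate since $\kappa_0$ stays supercompact, hence carries a $\vartriangleleft$-increasing $\lambda$-sequence, after the $\kappa_0$-directed closed preparation $\mathbb{Q}$) and replacing Theorem~\ref{maintheorem}~(2) with Theorem~\ref{magidormaintheorem}, with clause~(3) following from the fundamental theorem of Magidor forcing, part~(1), just as you argue. Your handling of the $\min(A)$-c.c. hypothesis in Lemma~\ref{ccpcf} and the telescoping induction via $\pcf{A \cup B} = \pcf{A} \cup \pcf{B}$ matches the paper's argument for Theorem~\ref{pcfinc}.
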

  
  \if0
  We next take up the diagonal Prikry forcing originally introduced by Gitik--Sharon  \cite{GS}. We refer the reader to \cite{NU} for the analogues of Lemmas \ref{diagonallemma} and \ref{prikrycondition}, which imply the following

    \begin{thm}
    The diagonal Prikry forcing over $\kappa$ forces that $\pcf{{\kappa}\cap \reg} = (2^{{\kappa}})^{+}\cap \reg$.
   \end{thm}
   \fi
   
   For $A \subseteq \reg$, define
   \begin{center}
    $\operatorname{pcf}^{\alpha}(A) = \begin{cases}A & \alpha = 0 \\ \pcf{\operatorname{pcf}^{\beta}(A)} & \alpha = \beta + 1 \\ \bigcup_{\beta < \alpha} \operatorname{pcf}^{\beta}(A) & \alpha \in {\rm Lim}
				      \end{cases}$ 
  \end{center}
Note that GCH implies $\pcf{\pcf{A}} = \pcf{A}$ for every $A \subseteq \reg$. 
By Theorem~\ref{pcfinc2}, it is consistent that 
${\langle\operatorname{pcf}^{n}(A) \mid n < \omega\rangle}$ is $\subsetneq$-increasing for some $A\subseteq \reg$. We conclude this paper with the following
\begin{ques}
   Is it a theorem of {\rm ZFC} that for every $A \subseteq \reg$ there is an $\alpha$ such that 
   $\operatorname{pcf}^{\alpha + 1}(A) = \operatorname{pcf}^{\alpha}(A)$?
  \end{ques}

   \bibliographystyle{plain}

\begin{thebibliography}{}

\end{thebibliography}


\begin{thebibliography}{10}

\bibitem{AM}
	{Abraham, U. and Magidor, M.} Cardinal arithmetic. In {Handbook of set theory. {V}ols. 1, 2, 3}, pages 1149--1227.
	Springer, Dordrecht, 2010.

 \bibitem{apter}
	 {Apter, A. W.} Some results on consecutive large cardinals. {Ann. Pure Appl. Logic} 25(1983), no.1, 1--17.

\bibitem{easton}
	{Easton, W. B.}
	Powers of regular cardinals. {Ann. Math. Logic} 1 (1970), 139--178.
	
\bibitem{gitik}
	{Gitik, M.}
	Prikry-type forcings.
	In {Handbook of set theory. {V}ols. 1, 2, 3}, pages 1351--1447.
	Springer, Dordrecht, 2010.

\bibitem{kanamori}
	{Kanamori, A.}
	{The higher infinite}. Springer Monographs in Mathematics. Springer-Verlag, Berlin, second edition, 
	Large cardinals in set theory from their beginnings, Paperback
	reprint of the 2003 edition, 2009.

 \bibitem{laver}
	 {Laver, R.}
	 Making the supercompactness of {$\kappa $} indestructible under
	 {$\kappa $}-directed closed forcing.
	 {Israel J. Math.} 29(1978), no.4, 385--388.

\bibitem{magidor1978changing}
	{Magidor, M.}
	Changing cofinality of cardinals.
	{Fund. Math.} 99(1978), no. 1, 61--71.

 \bibitem{NU}
	{Neeman, I. and Unger, S.}
	 Aronszajn trees and the {SCH}.
	 In {Appalachian set theory 2006--2012}, volume 406 of {\em London
  Math. Soc. Lecture Note Ser.}, pages 187--206. Cambridge Univ. Press,
	 Cambridge, 2013.

\bibitem{prikry}
	{Prikry, K. L.}
	Changing measurable into accessible cardinals.
	{Dissertationes Math. (Rozprawy Mat.)} 68(1970), 55.

\bibitem{shelah}
	{Shelah, S.}
	{Cardinal arithmetic}. volume~29 of {Oxford Logic Guides}.
	The Clarendon Press, Oxford University Press, New York, 
	Oxford Science Publications, 1994.

 \bibitem{silver}
	 {Silver, J.}
	 On the singular cardinals problem.
	 In {Proceedings of the {I}nternational {C}ongress of
	 {M}athematicians ({V}ancouver, {B}. {C}., 1974), {V}ol. 1}, pages 265--268, 1975.

\end{thebibliography}

\end{document}